\numberwithin{equation}{section}
\newtheorem{theorem}{Theorem}[section]
\newtheorem{prop}[theorem]{Proposition}
\newtheorem{lemma}[theorem]{Lemma}
\newtheorem{cor}[theorem]{Corollary}
\theoremstyle{definition}
\newtheorem{definition}[theorem]{Definition}
\newtheorem{example}[theorem]{Example}
\newtheorem{remark}[theorem]{Remark}
\newtheorem{question}[theorem]{Question}
\def\<{{\langle}}
\def\>{{\rangle}}
\def\G{{\Gamma}}
\def\a{{\alpha}}
\def\b{{\beta}}
\def\g{{\gamma}}
\def\d{{\delta}}
\def\Z{\mathbb Z}
\def\R{\mathbb R}
\def\T{{\mathbb T}}
\def\S{{\mathbb S}}
\def\De{\Delta}
\def\D{\cal D}
\def\a{\alpha}
\def\b{\beta}
\def\si{\sigma}
\def\t{\tau}
\def\k{{\kappa}}
\def\Gr{\mathbb G}
\def\L{\cal L}
\def\La{{\Lambda}}
\def\Rd{{\cal R}_d}
\def\n{{\bf n}}
\def\e{\epsilon}
\def\A{\mathbb A}
\def\o{\overline}
\def\bs{\bigskip}
\def\n{{\bf n}}
\def\s{{\bf s}}
\def\g{\gamma}
\def\ni{\noindent} 
\begin{document}

\title{Graph Complexity and Link Colorings}

\author{Daniel S. Silver 
\and Susan G. Williams\thanks {Both authors are grateful for the support of the Simons Foundation.} }

\maketitle %{\setlength{\linewidth}{2in}

%%%%%%%%%%%%%%%%%%%%%%%%%%%%%% 

\begin{abstract} The \emph{(torsion) complexity} of a finite signed graph is defined to be the order of the torsion subgroup of the abelian group presented by its Laplacian matrix. When $G$ is $d$-periodic (i.e., $G$ has a free $\Z^d$-action by graph automorphisms with finite quotient) the Mahler measure of its Laplacian polynomial is the growth rate of the complexity of  finite quotients of $G$.  Any 1-periodic plane graph $G$ determines a link $\ell \cup C$ with unknotted component $C$. In this case the Laplacian polynomial of $G$ is  related to the Alexander polynomial of the link. Lehmer's question, an open question about the roots of monic integral polynomials, is equivalent to a question about the complexity growth of signed 1-periodic graphs that are not necessarily embedded. \bigskip

MSC: 05C10, 37B10, 57M25, 82B20
\end{abstract}

\section{Motivation} \label{Intro} Consider a locally finite graph without multiple edges or isolated vertices. 
Let $p$ be an odd prime. We can attempt to form a ``$p$-coloring" of the graph, assigning elements (``colors") of $\Z/p$ to the vertices in such a way that the color of any vertex is the average of the colors of the adjacent vertices. Obviously, the graph can be colored trivially or ``monochromatically," that is, with any single color. But nontrivial $p$-colorings might exist as well. It is clear that the set of all $p$-colorings of the graph is a vector space under vertex-wise addition and scalar multiplication.  

Now consider a plane diagram of a knot or link. (As a knot is a link with only one component, henceforth we will use the more general term.) We can try to form a ``$p$-coloring,"  assigning colors to the arcs in such a way that the color of any overcrossing arc is the average of the colors of its two undercrossing arcs (see Section \ref{Links}). Again we can color trivially, but nontrivial $p$-colorings can exist too. The set of the all $p$-colorings of the diagram is a vector space under arc-wise addition and scalar multiplication. 

There is a strong relationship between $p$-colorings of plane graphs and $p$-colorings of link diagrams \cite{STW20, SW20'}. We review the relationship in Section \ref{diagrams}.
By substituting the continuous palette $\T= \R/\Z$, which contains $\Z/p$ as a subgroup, and replacing the plane with a compact orientable surface $S$, we enter the world of algebraic dynamical systems \cite{SW00}. When $S$ is an annulus, we obtain a version of Lehmer's unanswered question about roots of monic integral polynomials, which we formulate in terms of signed graphs. \bs

\ni {\bf Acknowledgements.} It is the authors' pleasure to thank Abhijit Champanerkar, Eriko Hironaka, Matilde Lalin and Chris Smythe for helpful comments and suggestions.

%%%%%%%%%%%%
\section{Coloring and complexity of finite graphs} \label{Coloring}

Let $G$ be a graph, not necessarily planar, with vertex set $V(G)=\{v_1, \ldots, v_n\}$ and edge set $E(G)=\{e_1, \ldots, e_m\}$. The graph is allowed to have loops and multiple edges, but no isolated vertices. Each edge $e \in E(G)$ is labeled with a sign $\si_e=\pm1$.  All graphs that we consider are assumed to have signed edges. The graph is \emph{unsigned} if every $\si_e =+1$.

The \emph{signed adjacency matrix} of $G$ is the $n \times n$ matrix 
$A = (a_{i, j})$ such that $a_{i, j}$ is the sum of the signs of all edges between $v_i$ and $v_j$, with loops counted twice. Define the \emph{signed degree matrix} $\d= (\d_{i, j})$ to be the $n \times n$ diagonal matrix with $\d_{i,i}$ equal to the sum of signs of edges incident on $v_i$. Again, loops contribute twice.

An integer matrix presents an abelian group in which columns (resp. rows) correspond to the generators (resp. relations) of the group. Equivalently, the group is the cokernel of the matrix when regarded as a linear transformation of free abelian groups.

\begin{definition} \label{complexity} The \emph{Laplacian matrix} of a finite graph $G$ is $L_G=\d - A$. The abelian group presented by $L_G$ is the \emph{Laplacian group} of $G$, denoted by ${\L}_G$. The \emph{(torsion) complexity} $\k_G$ is the order of the torsion subgroup $T{\L}_G$.
\end{definition}

We note that in computing the integer matrix $L_G$ we may ignore loops, since they contribute equally to $A$ and $\d$.  However, this will not be the case for the extended definition we make in Section \ref{modules}.

When we regard the entries of $L_G$ modulo $p$, the rows represent the relations needed to $p$-color the graph. (Here we are extending the notion of $p$-coloring to signed graphs.) Hence the space of $p$-colorings of $G$ is isomorphic to the null space of $L_G$.  Nontrivial $p$-colorings exist precisely when its dimension is greater than 1. Note that the abelian group of $p$-colorings of $G$ is isomorphic to ${\L}_G \otimes \Z/p$.

Returning to integer coefficients, the nullity of the Laplacian matrix $L_G$ is equal to 1 whenever $G$ is connected and unsigned (see, for example, Lemma 13.1.1 of \cite{GR01}). In this case the Laplacian group ${\L}_G$ decomposes as the direct sum of $\Z$ and the torsion subgroup $T{\L}_G$. The Matrix Tree Theorem (c.f. \cite{Tu84}) implies that $\k_G = |T{\L}_G|$ is equal to the number of spanning trees of $G$.

More generally, we define \emph{tree complexity}  $\t_G$ of a connected graph $G$ by
\begin{equation*} \t_G =\bigg \vert \sum_{T} \prod_{e \in E(T)} \si_e\bigg \vert, \end{equation*}
where the summation is taken over all spanning trees of $G$. If $G$ not connected, then we define $\t_G$ to be the product of the tree complexities of its connected components.  Again by \cite{Tu84}, we have $\t_G = \k_G$ if and only if $\t_G$ is nonzero; for connected $G$ this common value is equal to the absolute value of any $(n-1) \times (n-1)$ principal minor of $L_G$. However,  the following example shows that $\t_G$ can vanish, whereas $\k_G$ is positive by definition.

\begin{example}\label{milnor}  Consider the connected graph $G$ in Figure \ref{milnor1}. Unlabeled edges here and throughout will be assumed to have sign $+1$. The Laplacian matrix $L_G$ is square of size $8$. A routine calculation shows that any principal $7 \times 7$ minor of $L_G$ vanishes, and hence $\t_G=0$.   However, the absolute value of the greatest common divisors of the $6 \times 6$ minors of $L_G$ is 9, and so $\k_G =9$. 

We can go further by computing the Smith Normal Form of $L_G$. We then see that ${\L}_G \cong \Z^2 \oplus \Z/3 \oplus \Z/3$. The vector space of $p$-colorings has dimension 2 for all primes $p \ne 3$, while the space of $3$-colorings of $G$ has dimension 4. 
\end{example} 

\begin{figure}[H]
\begin{center}
\includegraphics[height=2 in]{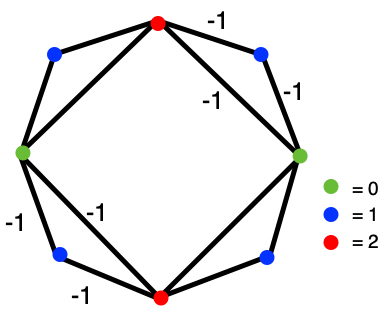}
\caption{Graph $G$ with $\t_G=0$ and $\k_G = 9$}
\label{milnor1}
\end{center}
\end{figure}

%%%%%%%%%%%%%%%%%%%%%%%%%%%%%%%

\section{Link diagrams and Tait graphs} \label{diagrams} A \emph{link} $\ell$ in $\R^3$ (or equivalently $\S^3$) is a set of smoothly embedded, pairwise disjoint simple closed curves. Any link can be described by a \emph{diagram} $D$, a 4-valent plane graph with a hidden-line device in a neighborhood of each vertex indicating how one strand of the link passes over another. 

We will make use of a few other related terms. An \emph{arc} of $D$ is a maximal connected subset. Following \cite{Ka01}, we refer to the underlying graph of $D$ as a \emph{universe} of $\ell$, denoted here by $|D|$. Finally, a \emph{region} of $D$ is a connected component of $\R^2 \setminus |D|$. 

As usual, isotopic links are regarded as the same. It is well known that two links are isotopic if and only if  a diagram of one can be transformed into a diagram of the other by a finite sequence of local ``Reidemeister moves" as in Figure \ref{reid} (see \cite{BZ03} or \cite{Li97} for details). Consequently, any quantity that is determined by a diagram and is unchanged by Reidemeister moves is a link invariant. 

\begin{figure}[H]
\begin{center}
\includegraphics[height=2 in]{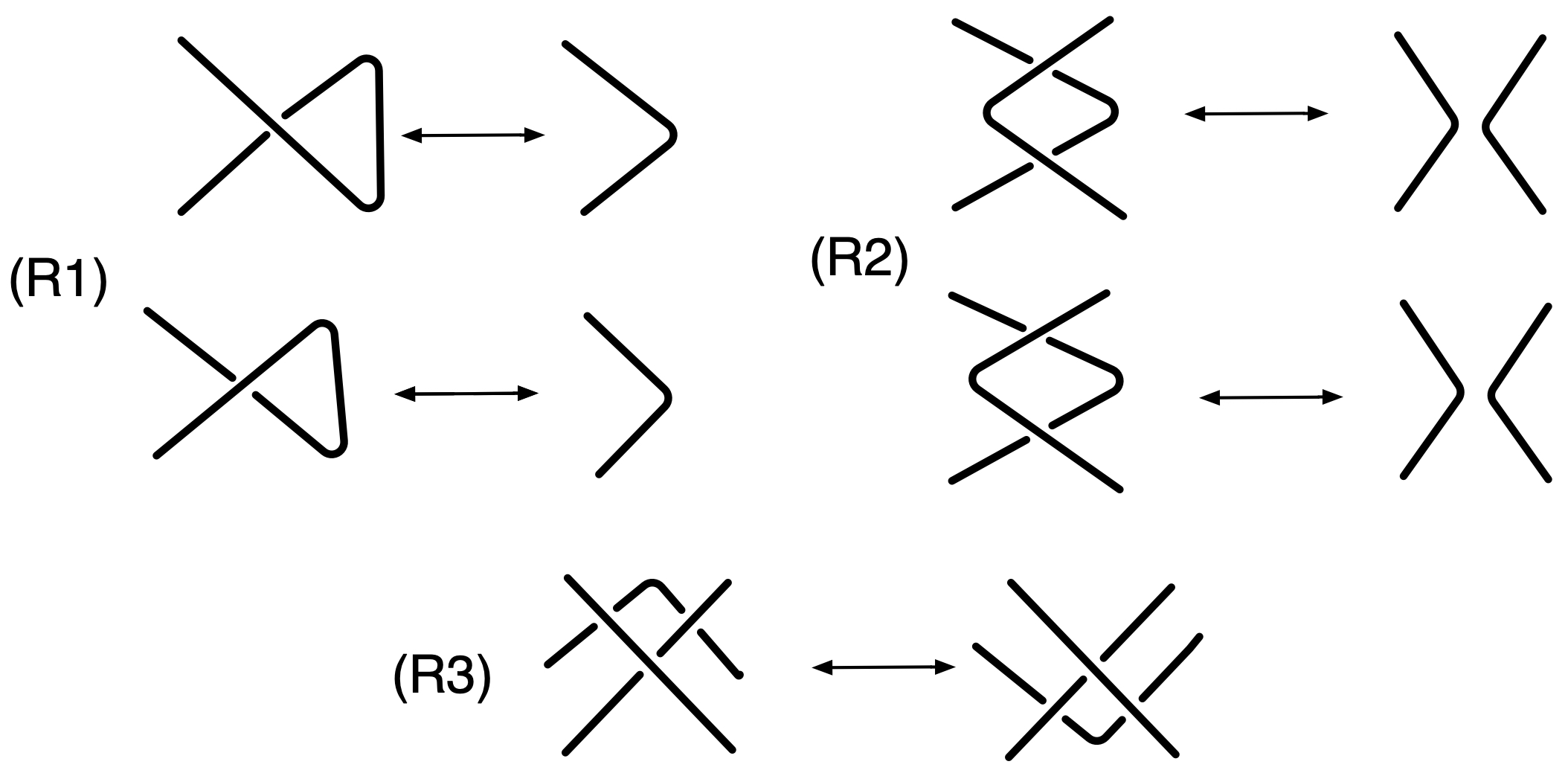}
\caption{Reidemeister moves}
\label{reid}
\end{center}
\end{figure}

We can obtain a plane graph $G$ from a link diagram $D$ by the following familiar procedure.  First we \emph{checkerboard shade} $D$, shading some of the regions so that every edge meets a single shaded region. There are two checkerboard shadings of $D$, but for the sake of definiteness we will choose the one in which the unbounded region is unshaded. We construct a graph $G$ with a vertex in each shaded region of $D$ and an edge through each crossing, joining the vertices of the regions on both sides. The sign of the edge is determined by the type of crossing, as in Figure \ref{medialgraph}. Such a graph is often called a ``Tait graph," in honor of Peter Guthrie Tait, a 19th century pioneer of knot theory.

\begin{figure}[H]
\begin{center}
\includegraphics[height=1.5 in]{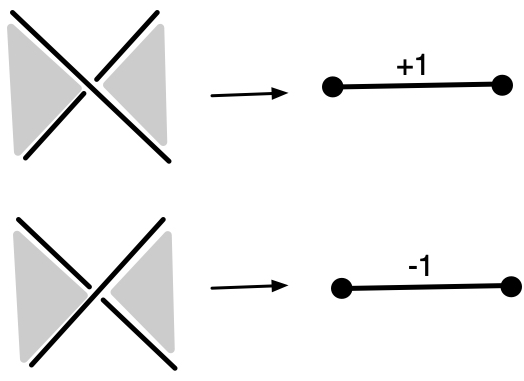}
\caption{Constructing a graph from a link diagram}
\label{medialgraph}
\end{center}
\end{figure}

For some link diagrams it can happen that the Tait graph has isolated vertices. We avoid this by always assuming that $|D|$ is connected, a condition that we can assume without loss of generality by applying Reidemeister moves to $D$. 

For a plane graph $G$ we can reverse the above procedure to obtain a link digram $D$. For this we use the \emph{medial construction}, replacing each edge of $G$ by a pair of arc segments that run parallel to the edge except at the middle, where they cross, as in Figure \ref{medial}.  We join the segments near vertices in the obvious way without creating any additional crossings. 

Figure \ref{milnor2} shows the diagram of a 3-component link $\ell$ obtained from the graph of Example \ref{milnor}. According to \cite{Co80} the link was introduced by J. Milnor. Original interest in the link came from the fact that it was the first non-alternating boundary link discovered with zero Alexander polynomial, a fact that we will not use here. 
We will return to the link in the next section. 

\begin{figure}[H]
\begin{center}
\includegraphics[height=1 in]{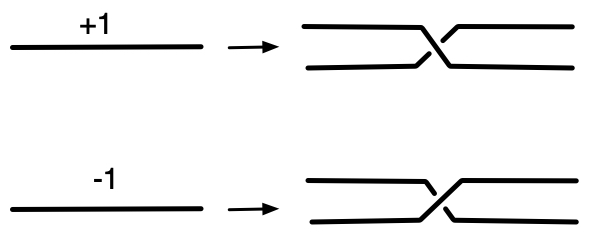}
\caption{Constructing a link diagram from a plane graph}
\label{medial}
\end{center}
\end{figure}
%%%%%%%%%%%%%%%%%%%%%%%%%%%%%%%%%%%%%%%%%%%%%
\section{Coloring link diagrams}\label{Links} 

Assume that $D$ is a diagram of a link $\ell$. Let $p$ be a prime. A \emph{(Fox) $p$-coloring} of $D$ is an assignment of elements of $\Z/p$, called \emph{colors}, to the arcs of $D$ such that twice the color of any overcrossing arc is equal to the sum of the colors of its undercrossing arcs, as in Figure \ref{fox}. A $p$-coloring is \emph{trivial} if all assigned colors are the same. 

\begin{figure}[H]
\begin{center}
\includegraphics[height=1 in]{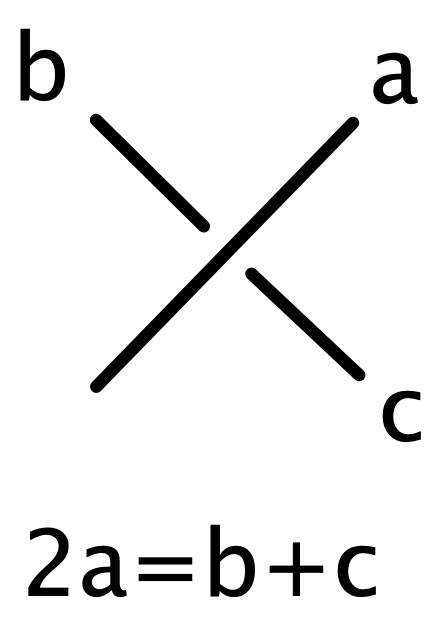}
\caption{Fox coloring relation}
\label{fox}
\end{center}
\end{figure}

We denote the vector space of $p$-colorings of $D$ by ${\rm Col}_p(D)$ and refer to it as the \emph{$p$-coloring space} of the diagram. It is a vector space over $\Z/p$, with addition and scalar multiplication performed arc-wise. A routine exercise shows that diagrams differing by a Reidemeister move have isomorphic $p$-coloring spaces. Hence ${\rm Col}_p(D)$ is an invariant of the link $\ell$, and so we write ${\rm Col}_p(\ell)$ and speak of the vector space as the $p$-coloring space of the link. (For more information about this popular construction of knot theory, there are many excellent expositions such as \cite{Li93}.)

Fox envisioned $p$-colorings as homomorphisms from the link group 
$\pi_\ell = \pi_1(\R^3 \setminus \ell)$ onto the dihedral group
$$D_{2p} = \< \t, \a \mid \t^2=1, \a^p=1, \a \t = \t \a^{-1} \>.$$ 
He described the correspondence using the Wirtinger presentation of $\pi_\ell$, in which generators (resp. relations) are identified with arcs (resp. crossings) of a diagram $D$. Given a $p$-coloring of $D$ we obtain a homomorphism from $\pi_\ell$ to $D_{2p}$ by sending a Wirtinger generator of an arc colored by $k \in \Z/p$ to the element $\t \a^k$ of $D_{2p}$.

If instead of the Wirtinger presentation, we use the Dehn presentation of $\pi_\ell$, a presentation in which
generators (resp. relations) are identified with bounded regions (resp. crossings), then a $p$-coloring becomes an assignment of colors to the bounded regions of $D$. We assign $0$ to the unbounded region. The condition at each crossing  that corresponds to the Fox $p$-coloring condition appears in Figure \ref{dehncol}. Details can be found in \cite{SW20}. We will call such an assignment of colors to the regions a \emph{Dehn $p$-coloring} of the diagram. The collection of all Dehn $p$-colorings of a diagram is vector space under region-wise addition and scalar multiplication, isomorphic to the space of Fox $p$-colorings. 

\begin{figure}[H]
\begin{center}
\includegraphics[height=1 in]{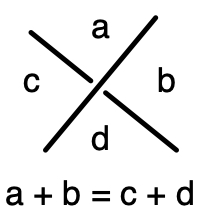}
\caption{Dehn $p$-coloring condition}
\label{dehncol}
\end{center}
\end{figure}

Assume that $D$ is a link diagram arising from a plane graph $G$ by the medial construction. Any $p$-coloring of $G$ determines Dehn and Fox $p$-colorings of $D$. To see this, first assign the colors of the vertices of $G$ to the associated shaded regions of the link diagram. Then use the Dehn coloring relations  to determine uniquely the colors of the unshaded regions. This is possible since the unbounded region is already labeled (with $0$). Uniqueness follows from the observation that if we determine the color of some unshaded region and then follow a simple closed path around a vertex, determining the colors of successive unshaded regions along the way, then when we return to the initial unshaded region, the Laplace relation forces us to arrive at the same color with which we began. (More details can be found in \cite{SW20}.) Finally, assign to each arc of $D$ the sum of the colors of the regions on both sides. It is easy to verify that we obtain in this way is well defined Fox $p$-coloring of the diagram. 

Conversely, any Fox $p$-coloring of a link diagram determines a $p$-coloring of the associated Tait graph. The Dehn color of any region is the sum of the colors of the arcs that we cross traveling along any path to the unbounded region. The graph coloring is simply a restriction to the shaded regions.

The two processes above are inverses of each other. Hence the vector spaces of $p$-colorings of $G$ and $D$ are isomorphic.

As an example, consider the diagram of Milnor's boundary link as in appears in Figure \ref{milnor2}. The Fox 3-coloring displayed corresponds to the 3-coloring of the graph G in Figure \ref{milnor1}.

\begin{figure}[H]
\begin{center}
\includegraphics[height=2 in]{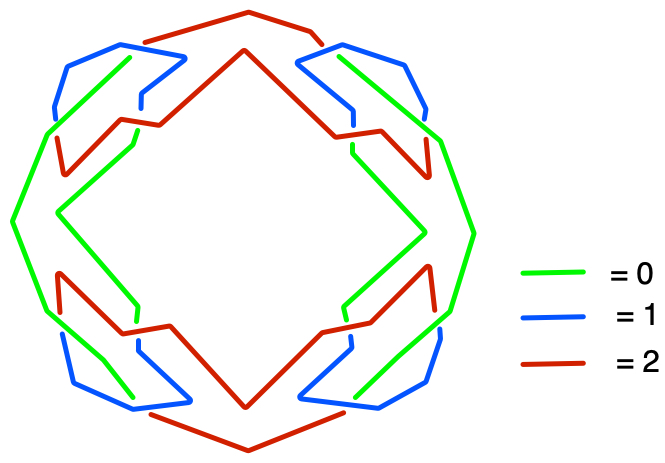}
\caption{$3$-colored diagram of Milnor's boundary link}
\label{milnor2}
\end{center}
\end{figure}

Every finite cyclic group is contained in the compact abelian ``circle group" $\T = \R/\Z$ as a subgroup. If we replace $\Z/p$ by $\T$, then the coloring vector spaces for graphs and link diagrams become compact abelian groups. (This extension of Fox $p$-coloring was introduced in \cite{SW00'}.) The Laplace group ${\L}_G$ tensored with $\T$ consists of $|T{\L}_G|$ tori each of dimension equal to the nullity of $L_G$. A similar description of the circle-coloring group ${\rm Col}_\T(\ell)$ applies. In the following sections we will explore this idea for infinite graphs and links with symmetries. The result is a rich structure that brings algebraic dynamics into our story.

%%%%%%%%%%%%%%%%%%%%%%%%%%%%%%%%%%
\section{Periodic graphs and Laplacian modules} \label{modules} A (signed) graph $G$ is \emph{$d$-periodic} if it admits a cofinite free $\Z^d$-action by automorphisms that preserves the signs of edges. By \emph{cofinite} we mean the quotient  graph $\o G$ is finite, while an action is \emph{free} if the stabilizer of any edge or vertex is trivial. Such a graph $G$ is locally finite, and finite if and only if $d=0$.

We regard $\Z^d, d \ge 1,$ as the multiplicative abelian group freely generated by $x_1, \ldots, x_d$. We denote the Laurent polynomial ring $\Z[\Z^d] = \Z[x_1^{\pm 1}, \ldots, x_d^{\pm 1}]$ by $\Rd$. As an abelian group $\Rd$ is generated freely by monomials 
$x^\n = x_1^{n_1} \ldots x_d^{n_d}$, where $\n = (n_1, \cdots, n_d) \in \Z^d$. We represent $(0, \ldots, 0)$ by ${\bf 0}$. For notational convenience, when $d=1$, we replace $x_1$ by $x$. 

The vertex set $V(G)$ and the edge set $E(G)$ consist of finitely many vertex orbits $\{v_{1, \n}\mid \n \in \Z^d\}, \ldots, \{v_{n, \n}\mid \n \in \Z^d\}$ and signed edge orbits $\{e_{1, \n}\mid \n \in \Z^d\}, \ldots, \{e_{m, \n} \mid \n \in \Z^d\}$, respectively. The $\Z^d$-action is determined by 
\begin{equation} x^{\n'} \cdot v_{i, \n} = v_{i, \n+ \n'}, \quad \quad   x^{\n'} \cdot e_{j, \n} = e_{j, \n+ \n'},\end{equation}
where $1\le i \le n,\ 1 \le j \le m$ and $\n, \n' \in \Z^d$. (When $G$ is embedded in some Euclidean space with $\Z^d$ acting by translation, it is usually called a \emph{lattice graph}. Such graphs arise frequently in physics, for example in studying crystal structures.)

When $d>1$ we can think of $G$ as covering a finite graph $\o G$ in the $d$-torus $\T^d = \R^d/\Z^d$.  When $d=1$, $G$ covers a finite graph $\o G$ in the annulus $\A=  I \times \S^1$. In either case the cardinality  $|V(\o G)|$ is equal to the number $n$ of vertex orbits of $G$, while $|E(\o G)|$ is the number $m$ of edge orbits. The projection map is given by $v_{i, \n} \mapsto v_i$ and $e_{j, \n} \mapsto e_j$.

%If $\La \subset \Z^d$ is a subgroup, then the intermediate covering graph in $\mathbb R^d/\La$ will be denoted by $G_\La$.  The subgroups $\La$ that we will consider have index $r < \infty$, and hence $G_\La$ will be a finite $r$-sheeted cover of $\o G$ in the $d$-dimensional torus  $\mathbb R^d/\La$.

The \emph{Laplacian matrix} of a $d$-periodic graph $G$ is defined to be the $n \times n$-matrix $L_G = \d - A$, where now $A = (a_{i,j})$ is the \emph{adjacency $\Rd$-matrix} with each entry $a_{i, j}$  equal to the sum of  monomials $\si_e x^\n$ for each edge $e \in E( \o G)$ between $v_{i, \bf{0}}$ and $v_{j, \n}$. Loops contribute two diagonal terms that are inverse monomials. The signed degree matrix $\d=(\d_{i,j})$ is defined as in Section \ref{Coloring}.

The matrix $L_G$ presents a finitely generated $\Rd$-module, the \emph{Laplacian module} of $G$, denoted by ${\L}_G$. 
The \emph{Laplacian (determinant) polynomial} $\De_G$ is the determinant of $L_G$. When $d=0$, $\Rd=\Z$ and these definitions reduce to the ones in Section \ref{Coloring}. Examples appear below; additional examples can be found in \cite{LSW14, SW20}.
The reader should be aware that in graph theory literature the term ``Laplacian polynomial" is often used for the characteristic polynomial of the integral Laplacian matrix. 
%%%%%%%%%%%%%%%%%%%%%%%%%

\section{Coloring periodic graphs} \label{col per} Let $G$ be a $d$-periodic graph. The collection of all $\T$-colorings of $G$ is the 
Pontryagin dual group $\widehat {\L}_G = {\rm Hom}({\L}_G, \T)$. Elements are functions $f: V(G) \to \T$ that assign to each 
vertex $v_{i, \n} \in V(G)$ a color $f(v_{i, \n})  \in \T$ such that the Laplacian condition (corresponding to the $i$th row of $L_G$) is satisfied: 
\begin{equation}\label{color} \d_{i,i} f(v_{i, \n}) = \sum_e \si_e f(v_{i', \n'}),\end{equation} 
where $\d_{i,i}$ is the signed degree of $v_{i, \n}$, and the summation is taken over all edges $e$ that connect $v_{i, \n}$ with some $v_{i', \n'} \in V(G)$, loops contributing twice.

We regard ${\L}_G$ with the discrete topology. Endowed with the compact-open topology,
 $\widehat{\L}_G$ is a compact space (see, for example, Section 2 of \cite{LSW90}).  It admits a $\Z^d$-action by automorphisms. Such an action is a homomorphism $\s: \n \mapsto \s_{\n}$ from $\Z^d$ to the automorphism group of 
$\widehat {\L}_G$. 

We denote $\widehat {\L}_G$ with its $\Z^d$-action by ${\rm Col}_{\T, \Z^d}(G)$. It is an example of a dynamical system known as a \emph{$\Z^d$-shift}. By the Pontryagin Duality Theorem we recover the Laplace module by taking the dual of ${\rm Col}_{\T, \Z^d}(G)$.
We will say more about the dynamical properties of $\widehat {\L}_G$ in Section \ref{knots}.

In the case of a finite plane graph and its associated link diagram ($d=0$), their isomorphic groups of $\T$-colorings are dual respectively to the Laplacian group of the graph and the \emph{abelian core group} of the link. The latter group is generated by the arcs of the diagram with relations given by the Fox coloring condition of Figure \ref{fox}; it is well known to be isomorphic to the direct sum of the first homology group of the 2-fold branched cover of the link and an infinite cyclic group.  (For more about such dynamical systems see \cite{Sc95, LSW90} or \cite{SW00'}. Information about the core group can be found in \cite{SW06}.)
%%%%%%%%%%%%%%%%%%
\section{Computing the Laplacian polynomial}

A \emph{cycle-rooted spanning forest} (CRSF) of $\o G$  is a subgraph of $\o G$  containing all of $\o V$ such that each connected component has exactly as many vertices as edges and therefore has a unique cycle. The \emph{connection}  $\phi$ of an oriented cycle is its homology class in $H_1(\T^d; \Z) \cong \Z^d$.
See \cite{Ke11} for details. 

The following is a consequence of the main theorem of \cite{Fo93}. It is made explicit in Theorem 5.2 of \cite{Ke11}.

\begin{theorem}\label{poly} \cite{Ke11} Let $G$ be a $d$-periodic graph. Its Laplacian polynomial has the form \begin{equation}\label{polys} \De_G = \sum_{\o F}\  \prod_{\o e \in E(\o F)} \si_{\o e} \prod_{\rm Cycles\ of\ \o F} (2-\phi -\phi^{-1}),\end{equation} where  the sum is over all cycle-rooted spanning forests $\o F$ of $\o G$, and $\phi, \phi^{-1}$ are the connections of the two orientations of the cycle. \end{theorem}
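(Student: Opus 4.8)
The plan is to reduce the $d$-periodic statement to a determinant computation over $\Rd$ and then apply a weighted (All-Minors) Matrix-Tree Theorem. First I would fix the finite quotient graph $\o G$ in the torus $\T^d$ and set up a combinatorial version of the Laplacian: choosing a fundamental domain, $L_G$ is the $n\times n$ matrix over $\Rd$ whose $(i,j)$ entry records, for each edge $\o e$ of $\o G$ joining $v_i$ to $v_j$ in the quotient, the monomial $\si_{\o e}x^{\n(\o e)}$ with $\n(\o e)\in\Z^d$ the displacement (``connection'') of that edge. I want to compute $\De_G=\det L_G$ by a generalized Cauchy–Binet / Matrix-Tree expansion. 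Write $L_G = \partial W \partial^{\top}$, where $\partial$ is an $n\times m$ incidence matrix over $\Rd$ (with the monomial twist $x^{\n(\o e)}$ placed on one endpoint of each edge $\o e$) and $W$ is the $m\times m$ diagonal matrix with entries $\si_{\o e}$. The crucial algebraic point is that when we expand $\det(\partial W \partial^{\top})$ by Cauchy–Binet over all ways of choosing $n$ columns (i.e.\ $n$ edges) of $\partial$, each term is a product of $\si_{\o e}$ over the chosen edge set times the square of a minor of $\partial$.

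Next I would identify which edge subsets contribute and what their minors are. A choice of $n$ edges of $\o G$ (with $n=|V(\o G)|$) gives a nonzero $n\times n$ minor of $\partial$ exactly when the chosen subgraph has one edge more in some sense than a forest: concretely, the subgraph must have every component containing exactly as many edges as vertices, i.e.\ each component is a tree plus one extra edge, hence carries a unique cycle — this is precisely a cycle-rooted spanning forest $\o F$. For such an $\o F$, the corresponding minor of the incidence matrix is, up to sign and up to a monomial unit, $\prod_{\text{cycles of }\o F}(1-\phi)$, where $\phi$ is the connection (homology class) of the cycle read off from the accumulated monomial twist around it; the sign/unit ambiguities are harmless because we square the minor. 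Squaring $(1-\phi)$ and normalizing the monomial gives $(1-\phi)(1-\phi^{-1}) = 2-\phi-\phi^{-1}$, one factor per cycle, and multiplying over cycles together with the edge-weight product $\prod_{\o e\in E(\o F)}\si_{\o e}$ yields exactly the summand in \eqref{polys}. Summing over all $\o F$ gives the formula.

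I expect the main obstacle to be the careful bookkeeping of the monomial twists and signs: making precise why an acyclic (forest) component contributes a unit determinant, why a one-cycle component contributes $(1-\phi)$ with $\phi$ the well-defined connection independent of the spanning-tree choice within the component and of the fundamental-domain conventions, and why components with two or more independent cycles force a vanishing minor. These are the points where one must invoke the structure of incidence matrices of graphs on a torus — essentially the content of Forman's theorem \cite{Fo93} and its explicit form in \cite{Ke11}. Rather than re-derive all of this, I would state the needed incidence-matrix lemma (nonvanishing $n\times n$ minors of $\partial$ correspond to CRSFs, with the displayed cycle factors), cite \cite{Fo93, Ke11} for it, and then the theorem follows by the Cauchy–Binet expansion of $\det(\partial W\partial^{\top})$ as above, with the edge signs collected into $W$. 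A remark should note that when $d=0$ every connection is trivial, each cycle factor $2-\phi-\phi^{-1}$ vanishes, so only genuine spanning forests survive and one recovers the signed Matrix-Tree Theorem / tree complexity $\t_G$ discussed after Definition \ref{complexity}.
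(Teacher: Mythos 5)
The paper itself gives no proof of Theorem \ref{poly}: it is quoted from Theorem 5.2 of \cite{Ke11} (which rests on Forman's theorem \cite{Fo93}), and your outline is essentially the argument of that cited source --- factor the Laplacian through a twisted incidence matrix, expand by Cauchy--Binet, and match the nonvanishing terms with cycle-rooted spanning forests. Leaving the incidence-matrix lemma as a citation is therefore consistent with the level of detail the paper itself adopts.

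There is, however, one step that is genuinely wrong as written. Over $\Z[x_1^{\pm1},\ldots,x_d^{\pm1}]$ the Laplacian is not symmetric but Hermitian with respect to the involution inverting each variable (the paper uses exactly this fact in the proof of Proposition \ref{realized}), so the correct factorization is $L_G=\partial\, W\, \overline{\partial}^{\top}$, where $\overline{\partial}$ denotes $\partial$ with every monomial inverted, not $\partial\, W\, \partial^{\top}$. Cauchy--Binet then pairs the minor of $\partial$ on a chosen edge set, which for a CRSF is a sign times a monomial times $\prod_{\rm cycles}(1-\phi)$, with the corresponding minor of $\overline{\partial}$, which is the same expression with $\phi$ replaced by $\phi^{-1}$; the signs and monomials cancel and each cycle contributes $(1-\phi)(1-\phi^{-1})=2-\phi-\phi^{-1}$ exactly. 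If instead you literally square the minor of $\partial$ you get $\prod(1-\phi)^2$, and ``normalizing by a monomial unit'' turns each factor into $-(2-\phi-\phi^{-1})$, introducing a sign per cycle that would corrupt the signed sum over forests --- so the conjugation is not harmless bookkeeping but the point of the computation. Two smaller corrections: with $n$ edges on $n$ vertices, what kills a term directly is a tree component (its incidence block has more rows than columns, forcing a non-square block decomposition and hence a zero minor); a component with two independent cycles only enters because it forces some other component to be a tree. And your closing remark about $d=0$ is not right: there every connection is trivial, so \emph{every} CRSF summand vanishes and the formula merely records $\det L_G=0$; the Matrix-Tree Theorem concerns codimension-one minors and is not recovered this way.
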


A $d$-periodic graph need not be connected. In fact, it can have countably many connected components. Nevertheless, the number of  $\Z^d$-orbits of components, henceforth called \emph{component orbits}, is necessarily finite. 

\begin{prop} \label{components}  If $G$ is a $d$-periodic graph with component orbits $G_1, \ldots, G_t$, then \break $\De_G = \De_{G_1}\cdots \De_{G_t}$. \end{prop}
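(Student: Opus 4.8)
The plan is to reduce the statement to a block-diagonalization of the Laplacian matrix $L_G$ with respect to the decomposition of $G$ into component orbits. First I would fix a transversal: choose representatives $v_1,\dots,v_n$ for the vertex orbits and $e_1,\dots,e_m$ for the edge orbits, so that $L_G$ is an $n\times n$ matrix over $\Rd$. Each vertex orbit lies in exactly one component orbit, since $\Z^d$ permutes the connected components of $G$ and an orbit representative determines which component (hence which component orbit) it belongs to. Partition $\{v_1,\dots,v_n\}$ accordingly into blocks $B_1,\dots,B_t$, with $B_k$ the set of vertex-orbit representatives whose orbit lies in $G_k$.

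The key observation is that if $v_i \in B_k$ and $v_j \in B_{k'}$ with $k \neq k'$, then the $(i,j)$ entry $a_{i,j}$ of the adjacency $\Rd$-matrix is zero: that entry is a sum of monomials $\si_e x^{\n}$ over edges $e$ of $\o G$ joining $v_{i,\mathbf 0}$ to $v_{j,\n}$ in $G$, and any such edge would connect the component of $v_{i,\mathbf 0}$ (in $G_k$) to the component of $v_{j,\n}$ (in $G_{k'}$), forcing those two components to coincide — impossible since distinct component orbits have disjoint sets of components. The diagonal degree matrix $\d$ is supported on the blocks trivially. Hence, after reordering the basis so that the blocks $B_1,\dots,B_t$ appear consecutively, $L_G$ is block diagonal with the $k$th block equal to $L_{G_k}$, the Laplacian matrix of the $d$-periodic graph $G_k$ (here one checks that $G_k$ is itself $d$-periodic: the $\Z^d$-action restricts to $G_k$ because $\Z^d$ preserves the decomposition into component orbits, and the quotient $\o{G_k}$ is the corresponding union of components of $\o G$, hence finite). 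Taking determinants of a block-diagonal matrix gives $\De_G = \det L_G = \prod_{k=1}^t \det L_{G_k} = \prod_{k=1}^t \De_{G_k}$.

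The main obstacle — really the only point requiring care — is the bookkeeping that the restriction of the $\Z^d$-action to a component orbit $G_k$ is again a cofinite free action, so that $\De_{G_k}$ is defined, and that the Laplacian matrix built from the chosen representatives inside $B_k$ is genuinely the Laplacian matrix of $G_k$ and not merely a submatrix of $L_G$ (this is where one uses that no edges cross between blocks, so the relevant adjacency and degree data for $G_k$ are computed entirely within $B_k$). Freeness and preservation of signs are inherited immediately from the ambient action; cofiniteness of the restricted action follows because $\o{G_k}$ is a subgraph of the finite graph $\o G$. Once these verifications are in place the determinant computation is immediate. Alternatively, one could bypass matrices entirely and argue that $\Rd$-module-theoretically ${\L}_G \cong \bigoplus_{k} {\L}_{G_k}$ from the same block structure, then invoke multiplicativity of the order ideal / characteristic polynomial of a direct sum; but the matrix argument is the cleanest and is self-contained.
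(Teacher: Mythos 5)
Your proof is correct and follows essentially the same route as the paper, which also observes that after relabeling the vertex orbits the Laplacian matrix is block diagonal with blocks $L_{G_1},\ldots,L_{G_t}$ and then takes determinants; you have simply spelled out the bookkeeping (no edges between distinct component orbits, the restricted $\Z^d$-action on each $G_k$ is cofinite and free) that the paper leaves implicit.
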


\begin{proof} After suitable relabeling, the Laplacian matrix for $G$ is a  block diagonal matrix with diagonal blocks equal to the 
Laplacian matrices for $G_1, \ldots, G_t$. The result follows immediately. Alternatively, it can be deduced from Theorem \ref{poly}.
\end{proof}

\begin{prop} \label{zeropoly} Let $G$ a $d$-periodic graph. If $G$ contains a finite component, then its Laplacian polynomial $\De_G$ is identically zero. The converse statement is true if $G$ is unsigned.
\end{prop}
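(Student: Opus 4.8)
The plan is to prove the two implications separately; the first holds for arbitrary signed graphs, while the converse will use a positive‑semidefiniteness argument that genuinely needs the unsigned hypothesis. A structural fact I will isolate and use twice: \emph{a connected component $C$ of a $d$-periodic graph is finite if and only if its stabilizer $\{\mathbf m\in\Z^d:\mathbf m\cdot C=C\}$ is trivial.} One direction is immediate, since a finite subgroup of $\Z^d$ acting freely on the finite set $V(C)$ must be trivial; for the other, restrict the covering $G\to\o G$ to the $\Z^d$-orbit of $C$: this exhibits a finite subgraph of $\o G$ isomorphic to $C/\mathrm{stab}(C)$ as a quotient, so $C/\mathrm{stab}(C)$ is finite, and hence $C$ is finite precisely when $\mathrm{stab}(C)$ is (equivalently, has rank $0$).

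\emph{Finite component $\Rightarrow \De_G\equiv 0$.} Let $C$ be a finite component, so $\mathrm{stab}(C)$ is trivial and the $\Z^d$-orbit $G_1$ of $C$ is a component orbit that is a disjoint union of pairwise disjoint translates of $C$; in particular every edge of $G_1$ has both endpoints in one translate. Choosing the orbit representatives inside $C$, every monomial $\si_e x^{\n}$ occurring in the adjacency $\Rd$-matrix of $G_1$ then has $\n=\mathbf 0$, so $L_{G_1}$ is the constant integer matrix $\d-A$ of the finite signed graph $C$. The rows of such a matrix sum to $0$ (both $\d_{i,i}$ and $\sum_j a_{i,j}$ equal the sum of the signs of the edges at $v_i$, loops doubled), so $\De_{G_1}=\det L_{G_1}=0$; Proposition~\ref{components} then gives $\De_G=\De_{G_1}\cdots\De_{G_t}=0$. (Alternatively, $\o G_1\cong C$ carries only null‑homologous cycles, so in Theorem~\ref{poly} every cycle‑rooted spanning forest contributes a factor $2-1-1=0$.)

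\emph{Converse, $G$ unsigned with no finite component.} I will evaluate $\De_G$ on the torus: for $\t=(\t_1,\dots,\t_d)$ put $x_j=e^{2\pi i\t_j}$, so $\De_G(x)=\det L_G(\t)$. Because $G$ is unsigned the adjacency entries satisfy $a_{i,j}(\t)=\overline{a_{j,i}(\t)}$, so $L_G(\t)$ is Hermitian, and a direct term‑by‑term computation (the ``twisted Dirichlet form'', with loops contributing pairs of inverse monomials and parallel edges handled individually) gives, for every $f\in\Co^n$ indexed by the vertex orbits,
\[
f^{*}L_G(\t)\,f=\sum_{e\in E(\o G)}\bigl|\,f_{i(e)}-x^{\n(e)}f_{j(e)}\,\bigr|^{2}\ \ge\ 0 ,
\]
where $i(e),j(e)$ are the vertex orbits joined by the edge orbit $e$, with $j(e):=i(e)$ when $e$ is a loop. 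So $L_G(\t)$ is positive semidefinite, and it is singular exactly when some nonzero $f$ satisfies $f_{i(e)}=x^{\n(e)}f_{j(e)}$ for all $e\in E(\o G)$. Given such an $f$, the function $F(v_{i,\n})=x^{\n}f_i$ takes equal values at the two ends of every edge of $G$, hence is constant on components. Choosing $i_0$ with $f_{i_0}\ne 0$, $F\equiv f_{i_0}\ne 0$ on the component $C$ of $v_{i_0,\mathbf 0}$; since $G$ has no finite component, $C$ is infinite, so by the structural fact $\mathrm{stab}(C)$ contains some $\mathbf h\ne\mathbf 0$, and then $f_{i_0}=F(\mathbf h\cdot v_{i_0,\mathbf 0})=x^{\mathbf h}f_{i_0}$ forces $x^{\mathbf h}=1$. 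There are only finitely many components $C_1,\dots,C_n$ meeting $\{v_{1,\mathbf 0},\dots,v_{n,\mathbf 0}\}$; picking a nonzero $\mathbf h_i\in\mathrm{stab}(C_i)$ for each, we conclude that whenever $L_G(\t)$ is singular the nonzero Laurent polynomial $P=\prod_{i=1}^n(x^{\mathbf h_i}-1)$ vanishes at $\t$. A nonzero Laurent polynomial cannot vanish identically on $\T^d$, so there is $\t^{*}\in\T^d$ with $P(\t^{*})\ne 0$; then $L_G(\t^{*})$ is nonsingular, so $\De_G(\t^{*})=\det L_G(\t^{*})\ne 0$ and $\De_G\not\equiv 0$. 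Proposition~\ref{components} extends this to disconnected $G$.

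\emph{Main obstacle.} Neither implication has a hard analytic core once the bookkeeping is in place; the real content is the structural equivalence between finiteness of a component and triviality of its stabilizer, which is what makes $L_{G_1}$ genuinely constant in the first part and produces the crucial nonzero translation $\mathbf h$ in the second. The only other point demanding care — but purely routine care — is verifying the twisted Dirichlet identity under the paper's signed‑adjacency conventions, especially the loop and multiple‑edge contributions.
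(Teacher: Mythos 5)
Your proof is correct, but it takes a genuinely different route from the paper, especially for the converse. The paper proves both implications from the cycle-rooted spanning forest expansion of Theorem \ref{poly}: for the forward direction, all cycles of the relevant quotient have trivial connection, so every CRSF summand carries a factor $2-\phi-\phi^{-1}=0$; for the converse, each component of $\o G$ contains a homologically nontrivial cycle (because the components of $G$ are infinite), one extends these cycles to a CRSF whose summand has positive constant coefficient, and since $G$ is unsigned every summand has nonnegative constant coefficient, so the constant term of $\De_G$ is positive. You instead argue by elementary linear algebra: the forward direction via the stabilizer-triviality lemma, which makes $L_{G_1}$ an honest integer Laplacian with zero row sums (the paper's appeal to trivial connections rests on essentially the same structural fact, which you make explicit); and the converse by evaluating $L_G$ on the unit torus, observing it is Hermitian positive semidefinite via the twisted Dirichlet form (this is exactly where the unsigned hypothesis enters), and showing that singular points are confined to the zero set of the nonzero Laurent polynomial $\prod_i(x^{\mathbf h_i}-1)$ built from component stabilizers, so $\De_G$ is nonvanishing somewhere on the torus. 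What each approach buys: the paper's argument is shorter once Theorem \ref{poly} is available and gives extra information (the constant coefficient of $\De_G$ is positive for such unsigned graphs), while yours is self-contained apart from Proposition \ref{components}, makes the role of the unsigned hypothesis transparent, and localizes the possible torus zeros of $\De_G$ at points where $x^{\mathbf h}=1$ for stabilizer elements $\mathbf h$. Two cosmetic points: the components $C_1,\dots,C_n$ you list need not be distinct (harmless), and your closing appeal to Proposition \ref{components} in the converse is unnecessary, since your torus argument never used connectivity.
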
 

\begin{proof} If $G$ contains a finite component, then some component orbit $G_i$ consists of finite components.  We have $\De_{G_i}=0$ by Theorem \ref{poly}, since all cycles of $\overline{G_i}$ represent trivial homology classes and hence have vanishing connection. By Proposition \ref{components}, $\De_G$ is identically zero. 

Conversely, assume $G$ is unsigned and every component is infinite. Each component of $\o G$ must contain a nontrivial cycle. We can extend this collection of cycles to a cycle rooted spanning forest $F$ with no additional cycles.  The corresponding summand  in Theorem \ref{poly} has positive constant coefficient. Since every summand has nonnegative constant coefficient,  $\De_G$ is not identically zero.

\end{proof}

%%%%%%%%%%%%%%%%%%%%%%%

\section{Plane 1-periodic graphs and links in solid tori} \label{medialsect} 
%It is well known that the Laplacian matrix of a finite plane graph $G$ is an unreduced Goeritz matrix of the associated link \cite{Go33}. The reduced matrix, obtained by deleting a row and column, presents the first homology group of the 2-fold cyclic cover of $\S^3$ branched over the link. (See \cite{Re74, Tr85, Li97} for additional details.)
 
When a plane graph $G$ is 1- or 2-periodic, the medial construction in Section \ref{Links} produces a diagram $D$ of an infinite link $\ell$. It has a finite quotient diagram $\o D$ modulo the $\Z$- or $\Z^2$-action induced by the action on $G$. 

For $d=1$ we regard $\o D$ in an annulus $\A$. It describes a link $\o \ell = \o \ell_1 \cup \cdots  \cup \o \ell_\mu$ in a solid unknotted torus $V$. The complement $({\rm int}\ V) \setminus \o \ell$ is homeomorphic to $\S^3 \setminus \hat \ell$, where  $\hat \ell = \o \ell \cup C$ is the link formed by the union of $\o \ell$ with a meridian $C$ of $V$. The meridian acquires an orientation induced by the infinite cyclic action on $D$.  It is easy to see that every link with an unknotted component that has even linking number with the rest of the link arises in this way.

The following result relates the Laplacian polynomial $\De_G$ to the Alexander polynomial $\Delta_{\hat \ell}$.

\begin {theorem} \label{alex} Let $G$ be a plane 1-periodic graph and $\hat \ell$ the encircled link $\o \ell \cup C$. Then 
$$\De_G(x) \  {\buildrel \cdot \over =}\   (x-1)\ \Delta_{\hat \ell}(-1, \ldots, -1, x),$$
where ${\buildrel \cdot \over =}$  indicates equality up to multiplication by units in $\Z[x^{\pm 1}]$. 

\end{theorem}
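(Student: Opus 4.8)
The plan is to compute the Alexander polynomial of $\hat\ell = \o\ell \cup C$ from a presentation of the link module adapted to the checkerboard structure of the diagram $\o D$, and to match it with Theorem~\ref{poly} applied to $G$. I would work with the Dehn presentation of the link group, since Section~\ref{Links} already identifies Dehn colorings of $D$ with colorings of the plane graph $G$: the bounded regions of $\o D$ that are shaded correspond to the $n$ vertex orbits of $G$, and the unshaded bounded regions correspond to the $m-n+1$ face orbits (equivalently, the complementary regions). First I would set up the Alexander module of $\hat\ell$ from the Dehn presentation, where the deck variable for $C$ is $x$ and the deck variables for the components of $\o\ell$ are the other variables; specializing each of those to $-1$ is exactly the algebraic shadow of the fact that traversing a crossing negates the Fox/Dehn color relative to the monodromy, i.e. it reduces the Wirtinger/Dehn relations at each crossing of $\o D$ to the signed Laplacian relation of $G$ twisted by the single variable $x$ that records the $\Z$-periodicity. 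Concretely, the crossing relation in Figure~\ref{dehncol}, once the $\o\ell$-variables are set to $-1$, becomes precisely an entry pattern of $L_G$ over $\Rd$ with $d=1$.

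The key steps, in order: (1) write down a square presentation matrix $M$ for the Alexander module of $\hat\ell$ from $\o D$, using regions as generators and crossings as relations, with one redundancy removed (the unbounded region is set to $0$), so $M$ is $(r)\times(r)$ where $r$ is the number of bounded regions; (2) perform the specialization $x_2=\cdots=x_\mu=-1$ and $C$-variable $=x$, and use row/column operations to eliminate the unshaded-region generators, leaving an $n\times n$ matrix over $\Z[x^{\pm1}]$ that one identifies, up to units, with $L_G(x)$ — the elimination is possible because each unshaded bounded region is "pinned" by the Dehn relations exactly as in the uniqueness argument in Section~\ref{Links}; (3) conclude $\Delta_{\hat\ell}(-1,\dots,-1,x) \doteq \det$ of that reduced matrix divided by the usual normalization factor $(x-1)$ coming from passing between the $0$th and $1$st Alexander ideals (the extra $\Z$ summand visible in the $d=0$ core group in Section~\ref{col per} is the source of this $(x-1)$); (4) finally, cross-check the answer against Theorem~\ref{poly}: expand $\det L_G(x)$ along the CRSF formula \eqref{polys}, note that for a \emph{plane} $1$-periodic graph the only connection values occurring are $\phi = x^{\pm k}$ where $k$ is the linking number of the corresponding cycle of $\o G$ with $C$, and observe the factor $(2-\phi-\phi^{-1})$ is the trace of the monodromy around that cycle — matching the Torres-type evaluation of $\Delta_{\hat\ell}$.

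The main obstacle I anticipate is Step~(2)–(3): controlling the bookkeeping between the Alexander polynomial conventions (which ideal, which normalization, handling of the unknotted component $C$ and the Torres condition) and the determinant $\De_G$, and in particular justifying the division by $(x-1)$ cleanly rather than by hand on examples. A secondary subtlety is that $\o\ell$ may be disconnected with several components, so "setting all $\o\ell$-variables to $-1$" must be done consistently with how the checkerboard shading distributes crossings among components; here the hypothesis that the relevant linking numbers are even (noted after the setup in Section~\ref{medialsect}) is exactly what makes the $-1$ specialization well-defined and consistent. I would handle both by phrasing the argument homologically — working in the chain complex of the appropriate cover of the annulus complement — rather than through explicit Wirtinger manipulations, and by invoking Theorem~\ref{poly} as the independent check that the normalization has been chosen correctly.
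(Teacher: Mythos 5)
Your route is genuinely different from the paper's. The paper stays with Fox (arc) relations: it cuts the periodic diagram $D$ along a fundamental domain to get a tangle $D_0$, presents the Laplacian module ${\L}_G$ as the amalgamated sum $\cdots\oplus_U B\oplus_U B\oplus_U\cdots$, and thereby gets a square matrix whose rows are the Fox relations of $D_0$ together with $xa'_i=a_i$; the theorem then follows by recognizing this matrix as the Wirtinger--Alexander matrix of $\hat\ell=\o\ell\cup C$ with the $\o\ell$-variables set to $-1$, the meridian of $C$ carrying $x$, and the column of that meridian deleted (Figure \ref{encircle}) --- the deleted column is exactly what produces the factor $(x-1)$. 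You propose instead to start from the Dehn (region) presentation of $\hat\ell$, specialize, and eliminate the unshaded-region generators to reach $L_G$; this is a reasonable alternative in spirit (it is close to \cite{STW20}), and your identification of shaded regions with vertices and of the specialized crossing relation with the Laplacian relation is sound.

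The gap is in steps (2)--(3), and it is not mere bookkeeping: as written, your presentation uses only the crossings and regions of $\o D$ (equivalently of $D$), so the component $C$ never actually enters your matrix, yet the statement to be proved concerns $\Delta_{\hat\ell}$. In the region formalism the variable $x$ must be produced by the monomial weights of the regions (their index with respect to $C$, i.e., winding around the annulus), the $2n$ crossings of $C$ with $\o\ell$ contribute regions and relations of their own, and the standard normalization for a region matrix is deletion of two adjacent region columns, not division by $(x-1)$; so the identity $\det(\hbox{reduced matrix})\ {\buildrel \cdot \over =}\ (x-1)\Delta_{\hat\ell}(-1,\ldots,-1,x)$ that you invoke as ``the usual normalization'' is precisely the content of the theorem, and it is where the paper does its real work (the relations $m+xa'_i=a_i-m$ at the crossings with $C$, the redundancy of the back relations, and deletion of the $m$-column). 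Until that comparison is carried out in the Dehn setting --- including the multivariable specialization of the region weights --- the proof is incomplete. A smaller point: the even-linking-number remark in Section \ref{medialsect} only characterizes which links $\hat\ell$ arise from the construction; it is not needed to make the specialization $t_i=-1$ well defined, which always makes sense up to units, so it cannot be carrying the load you assign it.
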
 

\begin{proof} The argument at the end of section \ref{col per} shows that the Laplacian group of a finite plane graph is isomorphic to the abelian core group of the associated link.
The same argument can be applied to any 1-periodic graph $G$ and its associated link $\ell$. (Either of the two unbounded regions of the link diagram can be used as the base region as we pass from from vertex colorings to Fox colorings via Dehn colorings.) 

Let $D$ be the diagram of $\ell$ obtained from $G$ by the medial construction. We regard $D$ as lying in a strip $I \times \R \subset \R^2$, with   $R= I \times [0, 1]$ a fundamental domain for the $\Z$-action. Then $D$ meets $R$ in a tangle diagram $D_0$. We label the arcs of $D_0$ meeting the ``top," $R \times \{1\}$, by $a_1, \ldots, a_n$ and those meeting the ``bottom," $R \times \{0\}$, by $a'_1, \ldots, a'_n$. (It can happen that some $a_i$ and $a'_j$ are identical.) Let $b, c, \ldots$ be labels for the remaining arcs of $D_0$. 

Define $B$ to be the quotient of the free abelian group on $a_1, \ldots, a_n, a'_1, \ldots, a'_n, b, c \ldots$ by the Fox relations (Figure \ref{fox}) of the crossings in $D_0$. Let $U$ be the free abelian on $u_1, \ldots, u_n$, and $f: U \to B$ (resp. $g: U \to B$)  the homomorphisms mapping each $u_i$ to $a_i$ (resp. $u_i$ to $a'_i$). 
The Laplacian module has the form 
\begin{equation}\label{module} \cdots \oplus_U B\oplus_U B \oplus_U \cdots \end{equation}
with identical amalgamations $B\ {\buildrel g \over \leftarrow}\ U\ {\buildrel f \over \rightarrow}\ B.$ The module action of $x$ merely shifts 
each summand $B$ one place to the right. 
Thus the Laplacian module ${\L}_G$ is the cokernel of the square matrix  $A$ with columns corresponding to the arcs of $D_0$ and rows recording the Fox relations as well as the relations $xa'_1 = a_1, \ldots, xa'_n = a_n$.

We claim that the matrix $A$  is an Alexander matrix of the link $\bar \ell \cup C$ with $x$ corresponding to a meridian $m$ of $C$ while the meridianal variables of $\ell$ are set equal to $-1$ (cf. \cite{BZ03}). To see this, consider Figure \ref{encircle}. The Alexander matrix has columns corresponding to $a_1, \ldots, a_n, a'_1, \ldots, a'_n, b, c \ldots$ and the meridian $m$. There are $n$ rows corresponding to the crossing relations in $D_0$, and additional rows for relations $m + x a'_i = a_i - m$, where $i = 1, \ldots, n$. 
(The relations, which can be determined by Fox calculus or by considering the appropriate infinite cyclic cover, are unaffected by the directions of the arcs of $D_0$. The $n-1$ arcs in the back of $C$ correspond to generators defined by the relations that arise from $n-1$ of the $n$ crossings in the back; the last relation is redundant and can be ignored. Hence the extra generators and relations can be disregarded.) We delete the column corresponding to $m$ in order to obtain an Alexander matrix. The rows that we have added now correspond to the relations $xa'_i =a_i$. The result is the matrix $A$. The Alexander polynomial of $\bar \ell \cup C$ is the determinant of $A$ divided by $x-1$.

\begin{figure}[H]
\begin{center}
\includegraphics[height=1 in]{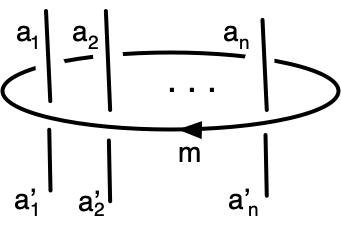}
\caption{Detail of $\bar \ell \cup C$}
\label{encircle}
\end{center}
\end{figure}

\end{proof}

\begin{prop} \label{nonzero} Let $G$ be a plane 1-periodic graph,  $D$ its associated link diagram, and $\ell$ the associated link. 
The Laplacian polynomial $\De_G$ is nonzero if and only if 
the Laplacian module ${\L}_G$ is a torsion module.
\end{prop}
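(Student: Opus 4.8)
The plan is to recognize that for a finitely generated module $M$ over the Laurent polynomial ring $\Rd$ (here $d=1$, so $\Rd = \Z[x^{\pm 1}]$), being a torsion module is equivalent to the vanishing of the zeroth Fitting ideal being false; that is, $M$ is torsion if and only if its zeroth Fitting ideal (the ideal generated by the maximal-size minors of a presentation matrix) is nonzero. Since $L_G$ is a square $n \times n$ matrix presenting ${\L}_G$, the zeroth Fitting ideal is the principal ideal generated by $\det L_G = \De_G$. So the statement reduces to the standard fact: a square-presented module over a Noetherian integral domain is a torsion module precisely when the determinant of the presentation matrix is nonzero.

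First I would set up the algebra: ${\L}_G = \mathrm{coker}(L_G : \Rd^{\,n} \to \Rd^{\,n})$, and recall that $\Rd = \Z[x^{\pm1}]$ is a Noetherian integral domain with fraction field $\Q(x)$. Tensoring the presentation $\Rd^{\,n} \xrightarrow{L_G} \Rd^{\,n} \to {\L}_G \to 0$ with $\Q(x)$ over $\Rd$ gives $\Q(x)^{\,n} \xrightarrow{L_G} \Q(x)^{\,n} \to {\L}_G \otimes_{\Rd} \Q(x) \to 0$. A module $M$ over the domain $\Rd$ is torsion if and only if $M \otimes_{\Rd} \Q(x) = 0$ (this is immediate, since for finitely generated $M$ the rank over $\Q(x)$ equals the number of $\Rd$-independent elements, and torsion means every element is killed by some nonzero scalar). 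So ${\L}_G$ is torsion $\iff$ the map $L_G$ over $\Q(x)$ is surjective $\iff$ (since it is a square matrix over a field) $L_G$ is invertible over $\Q(x)$ $\iff$ $\det L_G = \De_G \ne 0$ in $\Q(x)$, which is the same as $\De_G \ne 0$ in $\Rd$. That chain of equivalences is the whole proof.

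I do not expect a serious obstacle here; the result is essentially a dictionary entry relating the determinant polynomial to the torsion property, and it is really there to be cited in later sections. The only point requiring a word of care is the passage ``$M$ finitely generated over a domain is torsion $\iff M \otimes \Q(x) = 0$'': I would either prove it in one line (if $M$ is torsion, each generator dies after clearing a denominator, so the localization vanishes; conversely if $M \otimes \Q(x) = 0$ then each generator $m$ satisfies $m/1 = 0$ in the localization, so $sm = 0$ for some nonzero $s \in \Rd$) or simply invoke that $\Rd$ is an integral domain and localization at the zero ideal is exact. One could alternatively phrase the argument entirely in terms of the zeroth Fitting ideal $\mathrm{Fitt}_0({\L}_G) = (\De_G)$ and the fact that for finitely generated modules over a Noetherian domain the annihilator and $\mathrm{Fitt}_0$ have the same radical — but the direct fraction-field computation is cleaner and self-contained, so that is the route I would take.
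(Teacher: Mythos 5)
Your proof is correct and takes essentially the same route as the paper, whose proof is a one-line appeal to the same basic commutative algebra: since ${\L}_G$ has the square presentation matrix $L_G$, the determinant $\De_G$ (which generates the zeroth Fitting ideal and annihilates the module) controls the torsion property. Your fraction-field computation simply fills in the details the paper leaves implicit, and in fact sidesteps the paper's slightly stronger (and unnecessary) assertion that $\De_G$ generates the annihilator ideal.
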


\begin{proof}  The proposition follows from basic facts of commutative algebra. Since ${\L}_G$ has a square matrix presentation, $\De_G$ generates annihilator ideal of ${\L}_G$.  \end{proof}

\begin{prop} \label{equivalent} Let $G$ be a plane 1-periodic graph,  $D$ its associated link diagram, and $\ell$ the associated link. The following are equivalent.
\begin{enumerate}
\item The link $\ell$ has closed components. 
\item The group of 2-colorings of $D$ is infinite.
\item The Laplacian polynomial ${\De}_G$ reduced modulo 2 is identically zero.
\end{enumerate} 
\end{prop}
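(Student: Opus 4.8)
The plan is to establish the cycle of implications $(1)\Rightarrow(2)\Rightarrow(3)\Rightarrow(1)$, leaning on Theorem~\ref{alex} and Proposition~\ref{zeropoly} to handle the passage between the link picture and the Laplacian polynomial, and on the coloring dictionary of Sections~\ref{Links}--\ref{col per} for the passage between colorings and the Laplacian module. The key observation tying everything together is that the quotient graph $\o G$ in the annulus has a distinguished feature: a component of $G$ is finite precisely when the corresponding component of $\o G$ carries only null-homologous cycles (i.e.\ all connections $\phi$ are trivial), which by Theorem~\ref{poly} is exactly the obstruction to $\De_G$ being nonzero. So statement~(1), that $\o\ell$ has a closed component, should be translated into a statement about which components of $G$ are finite, and then fed into Proposition~\ref{zeropoly}. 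The subtlety is that Proposition~\ref{zeropoly} only asserts the converse ($\De_G\equiv 0 \Rightarrow$ finite component) when $G$ is \emph{unsigned}, whereas a general plane $1$-periodic graph is signed; working modulo $2$ is precisely the device that removes the signs, since over $\Z/2$ every $\si_e$ becomes $+1$.

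First I would make the geometric reduction: the medial construction identifies closed components of $\o\ell$ with those components of $\o D$ --- equivalently those components of the Tait graph $\o G$ --- whose lift to $G$ is finite, i.e.\ does not wind around the annulus. (A component of $G$ that is unbounded in the $\R$-direction projects to a component of $\o G$ containing a homologically nontrivial cycle and gives an infinite-order, hence non-closed, strand; a component of $G$ bounded in the $\R$-direction closes up in the solid torus.) This makes $(1)$ equivalent to: $G$ has a finite component. Next, for $(1)\Leftrightarrow(3)$, I would apply Proposition~\ref{zeropoly} not to $G$ over $\Z$ but to its mod-$2$ reduction $G_2$, the unsigned $1$-periodic graph with the same underlying graph. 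The formula~\eqref{polys} of Theorem~\ref{poly} reduces coefficient-wise mod $2$, so $\De_G \bmod 2 = \De_{G_2} \bmod 2$; since $G_2$ is unsigned, Proposition~\ref{zeropoly} gives $\De_{G_2}\equiv 0$ over $\Z$ iff $G_2$ (hence $G$) has a finite component. It remains to check that $\De_{G_2}$ vanishes identically over $\Z$ iff $\De_G$ vanishes identically over $\Z/2$ --- this is immediate, since $\De_{G_2}$ has nonnegative coefficients by the last paragraph of the proof of Proposition~\ref{zeropoly}, so it is zero over $\Z$ iff it is zero mod $2$. That closes $(1)\Leftrightarrow(3)$.

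For $(2)\Leftrightarrow(3)$ I would use the coloring dictionary. By the discussion in Section~\ref{Links} extended to $1$-periodic graphs in the proof of Theorem~\ref{alex}, the space of $2$-colorings of $D$ is isomorphic to the space of $2$-colorings of $G$, which is $\mathrm{Hom}_{\Z/2}({\L}_G\otimes\Z/2,\Z/2)$, i.e.\ the null space over $\Z/2$ of the Laplacian matrix $L_G$ acting on functions $V(G)\to\Z/2$. Now $L_G$ reduced mod $2$ is the Laplacian $\Z/2[x^{\pm1}]$-matrix of the unsigned periodic graph $G_2$; its determinant is $\De_G\bmod 2 = \De_{G_2}\bmod 2$. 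A square matrix over the ring $\Z/2[x^{\pm1}]$ has infinite-dimensional kernel over the field $\Z/2$ (acting on the "unrolled" infinite colorings) precisely when its determinant is the zero polynomial: if $\De_{G_2}\not\equiv 0 \pmod 2$ then ${\L}_G\otimes\Z/2$ is annihilated by a nonzero Laurent polynomial over the field $\Z/2$, hence is finitely generated torsion, hence finite, so its dual (the $2$-coloring space) is finite; conversely if $\De_{G_2}\equiv 0\pmod2$ then $L_G$ mod $2$ has a nontrivial kernel vector over the rational function field $\Z/2(x)$, which after clearing denominators and specializing gives infinitely many independent $2$-colorings (one per orbit representative, shifted). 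This is the step I expect to be the main obstacle: making precise the equivalence "$\det \equiv 0$ over $\Z/2[x^{\pm1}]$ $\iff$ infinitely many $\Z/2$-colorings" for the infinite graph, i.e.\ correctly relating torsion of the $\Rd$-module to finiteness of the dual group and then to finiteness of the coloring space. The cleanest route is probably to invoke Proposition~\ref{nonzero} in its mod-$2$ form --- $\De_G\bmod 2 \ne 0$ iff ${\L}_G\otimes\Z/2$ is a torsion $\Z/2[x^{\pm1}]$-module --- together with the standard fact (as in the references \cite{Sc95, LSW90}) that a finitely generated module over $\Z/2[x^{\pm1}]$ is finite iff it is torsion, and that Pontryagin duality turns "finite module" into "finite coloring group." With $(1)\Leftrightarrow(3)$ and $(2)\Leftrightarrow(3)$ in hand, the proposition follows.
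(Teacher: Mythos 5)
Your second half, the equivalence $(2)\Leftrightarrow(3)$, is essentially sound and is close in spirit to the paper's own argument: the $2$-coloring group of $D$ is the dual of ${\L}_G\otimes\Z/2$, a module over the PID $\F_2[x^{\pm1}]$ with square presentation matrix $L_G$ mod $2$, and it is finite exactly when $\det L_G=\De_G$ is nonzero mod $2$. The genuine gap is in your proposed route from $(1)$ to $(3)$: the medial construction does \emph{not} identify closed components of $\ell$ with finite components of $G$, so the ``geometric reduction'' at the start fails. A concrete counterexample: let $\o G$ be a single vertex with two unsigned loops, each winding once around the annulus, so that $G$ is the grid graph $\Gr_1$ with every edge doubled. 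Then $G$ is connected and infinite (no finite component), yet $\De_G=2(2-x-x^{-1})\equiv 0 \pmod 2$, and indeed the medial link $\ell$ has infinitely many closed components (tracing the straight-ahead strand through the four crossings adjacent to a vertex produces a closed curve encircling that vertex). Thus ``$(1)\Leftrightarrow G$ has a finite component'' is false and Proposition~\ref{zeropoly} cannot be invoked in the way you intend. The same example also refutes your auxiliary claim that $\De_{G_2}\neq 0$ over $\Z$ forces $\De_{G_2}\not\equiv 0$ mod $2$: here $\De_{G_2}=4-2x-2x^{-1}$ is nonzero over $\Z$ but vanishes mod $2$; moreover the proof of Proposition~\ref{zeropoly} only asserts nonnegativity of the \emph{constant} coefficients of the CRSF summands, not of all coefficients of $\De_{G_2}$ (e.g.\ $\De_{\Gr_1}=2-x-x^{-1}$).

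Because of this, statement $(1)$ is never validly linked to $(2)$ or $(3)$ in your proposal. The missing idea --- and the paper's route --- is to connect $(1)$ to $(2)$ directly through the components of the link rather than of the graph: mod $2$ the Fox relation at a crossing forces the two under-arcs to share a color, so a $2$-coloring of $D$ is precisely an arbitrary assignment of elements of $\Z/2$ to the components of $\ell$; hence the $2$-coloring group is infinite iff $\ell$ has infinitely many components. In turn, $\ell$ has infinitely many components iff it has a closed component, since a component of $\o\ell$ with nonzero winding number in the annulus lifts to finitely many non-closed strands, while one with zero winding number lifts to an infinite $\Z$-orbit of closed curves. Splicing this $(1)\Leftrightarrow(2)$ argument onto your $(2)\Leftrightarrow(3)$ step would repair the proof.
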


\begin{proof} 
Any 2-coloring of $D$ assigns a single color to every arc corresponding to a component of $\ell$. If the link has no closed components, then it has only finitely many components, and conversely. The equivalence of the first two statements follows.

Regard ${\L}_G$  as an abelian group. The vector space of 2-colorings of $D$ is isomorphic to ${\L}_G \otimes \Z/2$. Its dimension is the degree of the mod 2 reduction of $\De_G$, provided the reduced polynomial is nonzero; otherwise the dimension is infinite.
However, the dimension is also equal to the number of components of $\ell$. Hence the first and third statements are equivalent. \end{proof}  

The next proposition characterizes the leading coefficient of the Laplacian polynomial of a plane 1-periodic graph in terms of $\T$-colorings of its associated link. We will use  the notation in the proof of Theorem \ref{alex}.  Note that $\De_G(x)=\De_G(x^{-1})$, by Theorem \ref{poly}.

\begin{prop} \label{coef} Let $G$ be a plane 1-periodic graph,  $D$ its associated link diagram, and $D_0 \subset R$ a tangle diagram representing a fundamental region of $D$. Suppose $\De_G$ is nonzero. If we assign $0 \in \T$ to the arcs at the top of $D_0$, then the number of extensions to $\T$-colorings of $D_0$ is equal to the absolute value of the leading coefficient of $\De_G$. 
 \end{prop}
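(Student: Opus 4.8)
The plan is to reduce the count of $\T$-coloring extensions to a determinant computation, namely to $|\De_G(0)|$ interpreted correctly as a leading coefficient. Recall from the proof of Theorem~\ref{alex} that the Laplacian module is built from the amalgamated sum \eqref{module} with the gluing maps $f,g\colon U\to B$, where $U$ is free on $u_1,\dots,u_n$ and $B$ is the abelian group presented by the Fox relations of the tangle diagram $D_0$. A $\T$-coloring of the full diagram $D$ is a $\T$-valued point of $\widehat{\L}_G$; restricting to a fundamental region, it is determined by a $\T$-coloring of $D_0$ whose top values $a_1,\dots,a_n$ and bottom values $a'_1,\dots,a'_n$ satisfy the shift relation. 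If we instead fix the top arcs to be $0\in\T$, an extension to a $\T$-coloring of $D_0$ is exactly a solution in $\T$ of the linear system given by the Fox relations of $D_0$ together with $a_1=\cdots=a_n=0$. So the object to be counted is the kernel of the integer matrix $M$ obtained from the presentation matrix of $B$ by setting the top-arc columns to zero (equivalently, adjoining the rows $u_i\mapsto a_i$ and the relations $a_i=0$), acting on $\T^{(\#\text{arcs of }D_0)}$.

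First I would make $M$ square and identify its determinant. The matrix $A$ from the proof of Theorem~\ref{alex} is square, with columns indexed by the arcs of $D_0$ and rows given by the $n$ Fox relations of $D_0$ plus the $n$ shift relations $xa'_i=a_i$. Substituting $x=0$ kills the $a'_i$ contribution in the shift relations, so $A(0)$ is exactly the matrix $M$ describing the fixed-top extension problem (up to reindexing and signs), hence $M$ is square and $\det M=\pm\,\De_G(0)$ — but note that $\De_G$ is palindromic by Theorem~\ref{poly}, so its value at $0$ is literally $0$; the correct statement is that after clearing the $x^{-1}$ denominators, $A(x)$ is a genuine polynomial matrix whose determinant is $x^k\De_G(x)$ for suitable $k$, and the constant term of $\det A(x)$ — equivalently the leading coefficient of $\De_G$ — is $\pm\det A(0)=\pm\det M$. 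I would spell this bookkeeping out carefully: each loop or negative power in $L_G$ contributes a bounded $x$-denominator, and the total denominator is a fixed power of $x$ determined by $D_0$.

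Next comes the counting step: for a square integer matrix $M$, the number of solutions in $\T$ of $Mv=0$ equals $|\det M|$ when $\det M\ne 0$, and is infinite when $\det M=0$. This is the standard fact that $\ker(M\colon\T^N\to\T^N)$ is Pontryagin-dual to $\operatorname{coker}(M\colon\Z^N\to\Z^N)$, whose cardinality is $|\det M|$ (via Smith normal form). Since we are assuming $\De_G\ne 0$, its leading coefficient is nonzero, so $\det M\ne 0$ and the count is finite and equals $|\det M|=|\text{lead.\ coeff.\ of }\De_G|$, as claimed. I would cite the earlier discussion of Pontryagin duality in Section~\ref{col per} for the duality input.

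The main obstacle I anticipate is the $x$-denominator bookkeeping in the second paragraph — keeping precise track of how loops and the conventions for $L_G$ (inverse-monomial diagonal contributions) translate into the correct power of $x$ to clear, so that ``$\det A(0)$'' really does compute the leading coefficient of $\De_G$ rather than some shifted coefficient or zero. A clean way to sidestep sign and normalization worries is to argue directly: the module \eqref{module} specializes at $x=0$ to a single copy $B$ with the images of $f$ (the top maps) killed, whose $\T$-dual is precisely the set of fixed-top $\T$-colorings of $D_0$; then the leading coefficient of $\De_G$ is, by definition of the $x$-shift structure and the palindromic symmetry $\De_G(x)=\De_G(x^{-1})$, the order of the torsion of that specialized module. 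The rest is the routine duality count.
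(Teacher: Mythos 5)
Your argument is essentially the paper's own proof: the paper likewise identifies the fixed-top $\T$-colorings with the Pontryagin dual of $B/f(U)=\operatorname{coker}\bigl(A|_{x=0}\bigr)$ and observes that $\det\bigl(A|_{x=0}\bigr)$ is the order of that group and the absolute value of the trailing (hence, by palindromicity, the leading) coefficient of $\De_G$. Your added bookkeeping about clearing powers of $x$ only makes explicit a normalization the paper asserts without comment; the substance of the two arguments is the same.
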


\begin{proof}  The $\T$-colorings of $D$ that assign $0$ to arcs labeled $a_1, \ldots, a_n$ are the elements of the dual group of the quotient $B/f(U)$. The group $B/f(U)$ is the cokernel of the specialized Alexander matrix $A$ constructed in the proof of Theorem \ref{alex} with variable $x$ set equal to $0$. The determinant of $A$ is the order of $B/f(U)$ as well as the absolute values of both the trailing and leading coefficients of $\De_G$. 
\end{proof} 

\begin{prop} \label{distinct} Let $G$ be a plane 1-periodic graph, $D$ its associated link diagram, and $D_0 \subset R$ a tangle diagram representing a fundamental region of $D$. Suppose that the coefficients of $\De_G$ are coprime. Then any two distinct $\T$-colorings of $D_0$ with the same color assignments of the top arcs have different color assignments of the bottom arcs. 
\end{prop}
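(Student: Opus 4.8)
The plan is to pass from the two given colorings of the finite tangle $D_0$ to a single $\T$-coloring of the whole periodic link supported on one fundamental domain, and then to show that $\De_G$ obstructs such a coloring whenever its coefficients are coprime. Recall that a $\T$-coloring of $D_0$ is an element $\psi\in\widehat B$, and that the color assignments of the top and bottom arcs are encoded by $\psi\circ f$ and $\psi\circ g$ respectively. So let $\psi_1,\psi_2\in\widehat B$ be distinct with $\psi_1\circ f=\psi_2\circ f$, set $\psi=\psi_1-\psi_2$, and suppose toward a contradiction that also $\psi_1\circ g=\psi_2\circ g$. Then $\psi\ne 0$, $\psi\circ f=0$ and $\psi\circ g=0$, and it suffices to rule out the existence of such a $\psi$.

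Using the description of $\L_G$ from the proof of Theorem \ref{alex} as the bi-infinite amalgamation $\cdots\oplus_U B\oplus_U B\oplus_U\cdots$ of copies of $B$ glued along $U$ via $f$ and $g$, with $x$ shifting the summands, its dual group ${\rm Col}_{\T,\Z}(G)=\widehat{\L}_G$ is the group of bi-infinite sequences $(\chi_k)_{k\in\Z}$ with $\chi_k\in\widehat B$ and $\chi_k\circ f=\chi_{k+1}\circ g$ for all $k$, on which $x$ acts by the shift; concretely these are the $\T$-colorings of the copies of $D_0$ tiling the diagram of $\ell$ that agree on the arcs shared by consecutive copies. Now set $\chi_0=\psi$ and $\chi_k=0$ for $k\ne 0$. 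The compatibility conditions at the indices $0$ and $-1$ read $\psi\circ f=0$ and $\psi\circ g=0$, which hold by hypothesis, and all other conditions are vacuous; hence $\Psi:=(\chi_k)$ is a nonzero element of $\widehat{\L}_G$.

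Next I would compute the annihilator of $\Psi$ over $\Z[x^{\pm 1}]$ and invoke the coprimality hypothesis. Since $x$ acts as the shift, for $r=\sum_j c_j x^j$ the entry of $r\Psi$ in each slot is $c_j\psi$ for the appropriate $j$, so $r\Psi=0$ if and only if $c_j\psi=0$ for every $j$. Writing $q$ for the order of $\psi$ in $\widehat B$ (finite and $\ge 2$ if $\psi$ has finite order, and $q=\infty$ otherwise), this says ${\rm ann}_{\Z[x^{\pm1}]}(\Psi)=q\,\Z[x^{\pm1}]$ when $q<\infty$, and ${\rm ann}_{\Z[x^{\pm1}]}(\Psi)=\{0\}$ when $\psi$ has infinite order. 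On the other hand, $\L_G$ is presented by the square matrix $L_G$ with $\det L_G=\De_G$, so ${\rm adj}(L_G)\,L_G=\De_G I$ gives $\De_G\cdot\L_G=0$, whence $\De_G\cdot\widehat{\L}_G=0$ and in particular $\De_G\in{\rm ann}(\Psi)$. If $\psi$ had infinite order, this would force $\De_G=0$, which is impossible since the coefficients of $\De_G$ are coprime and so not all zero. If $\psi$ has finite order $q\ge 2$, then $q$ divides every coefficient of $\De_G$, again contradicting coprimality. This contradiction shows $\psi\circ g\ne 0$, i.e.\ $\psi_1$ and $\psi_2$ differ on the bottom arcs.

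The step most in need of care is the construction of $\Psi$: one must be certain that a sequence supported on a single fundamental domain really is a legitimate element of $\widehat{\L}_G$, equivalently that the only relations binding adjacent copies of $D_0$ inside the diagram of $\ell$ are the identifications of top arcs with bottom arcs recorded by $f$ and $g$, every Fox relation being internal to one copy of $D_0$. This is exactly the amalgamated description of $\L_G$ established in the proof of Theorem \ref{alex}, so it is available, but it should be invoked explicitly. The other ingredients — the shift computation of ${\rm ann}(\Psi)$, and the standard fact that the determinant of a square presentation matrix annihilates the presented module and hence, by Pontryagin duality, its dual — are routine, and the coprimality hypothesis enters only at the end, where it simultaneously excludes $\De_G=0$ and excludes a common prime divisor of the coefficients of $\De_G$.
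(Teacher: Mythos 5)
Your argument is correct, and it is the Pontryagin-dual of the paper's argument rather than a reproduction of it. The paper also subtracts the two colorings, but then works on the module side: from the nonzero character of $B$ vanishing on $f(U)+g(U)$ it deduces that $\o B = B/(f(U)+g(U))$ is nontrivial, forms the quotient module $\o{\L}_G \cong \bigoplus_{k\in\Z}\o B$ with $x$ acting by the shift, notes it is presented over ${\cal R}_1$ by an integral matrix for $\o B$, and then uses that this quotient is a torsion module (since ${\L}_G$ is, as $\De_G\ne 0$) whose $0$th characteristic polynomial is a nonzero constant dividing $\De_G$; coprimality forces that constant to be $\pm 1$, so $\o B=0$, a contradiction. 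You instead stay on the dual side: you package the difference $\psi$ as a single-slot element $\Psi$ of $\widehat{\L}_G$ using exactly the amalgamated description (\ref{module}) from the proof of Theorem \ref{alex}, compute ${\rm ann}_{{\cal R}_1}(\Psi)$ directly from the shift action, and play it off against the elementary fact that $\De_G=\det L_G$ annihilates ${\L}_G$ (hence its dual) via the adjugate identity. What your route buys is self-containedness: you avoid the commutative-algebra inputs the paper invokes (torsion-ness of the quotient and divisibility of $0$th characteristic polynomials), and your case split on the order of $\psi$ transparently shows where coprimality excludes both $\De_G=0$ and a common prime factor. What the paper's route buys is a slightly more structural statement (triviality of $\o B$ itself, i.e.\ of the whole group of colorings of $D_0$ that vanish on top and bottom), obtained in one stroke from the order ideal. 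Your one point of care --- that a sequence supported in a single slot really defines a character of the amalgamation because all Fox relations are internal to one copy of $D_0$ and the only cross-slot relations are $f$--$g$ identifications --- is precisely what (\ref{module}) asserts, so citing it as you do closes that step.
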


\begin{proof} We prove the proposition by contradiction. Assume that there exist two $\T$-colorings of $D_0$ with the same color assignments to the top arcs and also the same assignments to the bottom. We subtract to get a nontrivial $\T$-coloring with all arcs on both top and bottom colored trivially. By duality, the group quotient $\o B = B/(f(U)+g(U))$ must be nonzero. Consider the quotient module $\o {\L}_G$ of ${\L}_G$ described by (\ref{module}) with all elements of $x^i f(U)$ and $x^ig(U)$ set equal to $0$; it is a direct sum of countably many copies of $\o B$ with the module action of $x$ given by translation. An integral matrix presenting $\o B$ as an abelian group also presents $\o {\L}_G$  as module over ${\cal R}_1
 = \Z[x, x^{-1}]$. The group must be torsion since ${\L}_G$ is, and its 0th-characteristic polynomial $\o \De_G$ must divide $\De_G$.  Since $\o \De_G$ is a nonzero constant and the coefficients of $\De_G$ are coprime,  $\o \De_G = \pm 1$.  Hence $\o B$ is a trivial group, a contradiction. \end{proof} 

The following corollary follows form Propositions \ref{coef} and \ref{distinct}.

\begin{cor} Assume that $G$ is a plane 1-periodic graph, $D$ its associated link diagram, and $D_0 \subset R$ a tangle diagram representing a fundamental region of $D$. Assume that $\De_G$ is monic. Given any color assignment to the top arcs of $D_0$ that extends to a $\T$-coloring of $D_0$, the colors of the bottom arcs are uniquely determined. 
\end{cor}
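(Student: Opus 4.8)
The plan is to read the statement off Propositions~\ref{coef} and~\ref{distinct}, with the ``monic'' hypothesis supplying exactly what each of them needs. First I would record the two consequences of $\De_G$ being monic. Its leading coefficient is $\pm 1$; in particular $\De_G$ is nonzero, so both propositions apply, and moreover the coefficients of $\De_G$ are coprime, since their greatest common divisor divides the leading coefficient $\pm 1$, which is precisely the hypothesis of Proposition~\ref{distinct}. Applying Proposition~\ref{coef} to the all-zero assignment of the top arcs of $D_0$, the number of $\T$-colorings of $D_0$ extending it equals the absolute value of the leading coefficient of $\De_G$, namely $1$.

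Next I would use the abelian group structure on the set of $\T$-colorings of $D_0$ (arc-wise addition, as in the proof of Theorem~\ref{alex}). The colorings whose top arcs are all assigned $0$ form a subgroup $K$, and the count just obtained says $|K|=1$; since the identically-zero coloring lies in $K$, it is the only $\T$-coloring of $D_0$ with all top arcs colored $0$, and its bottom arcs are all $0$. Now let $t$ be an assignment to the top arcs that extends to a $\T$-coloring of $D_0$, and let $c,c'$ be two such extensions. Then $c-c'$ is a $\T$-coloring of $D_0$ with all top arcs colored $0$, hence $c-c'\in K$ is the zero coloring, so $c=c'$. In particular $c$ and $c'$ agree on the bottom arcs — equivalently, by Proposition~\ref{distinct}, two extensions of $t$ with differing bottom colorings would be distinct, contradicting $|K|=1$ after translating so that the common top assignment is $0$. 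Either way the bottom colors are determined by $t$ (indeed the entire coloring of $D_0$ is).

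Since the two cited propositions carry the real weight, I do not expect a genuine obstacle; the remaining content is bookkeeping. The one point requiring a little care is the passage from Proposition~\ref{coef}, which is stated for the zero top assignment, to an arbitrary admissible $t$: this is the coset argument above, using that the $\T$-colorings of $D_0$ extending $t$ form a coset of $K$ and hence also number $1$. It is worth flagging explicitly that ``monic'' is being used in both of its guises — leading coefficient $\pm 1$ for the count in Proposition~\ref{coef}, and coprimality of coefficients for Proposition~\ref{distinct} — so that neither proposition is invoked beyond its stated hypotheses.
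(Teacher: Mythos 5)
Your proof is correct and follows the paper's intended route: the paper gives no written argument beyond asserting that the corollary follows from Propositions \ref{coef} and \ref{distinct}, and your deduction from those statements (monic $\Rightarrow$ leading coefficient of absolute value $1$ and coprime coefficients, then the count from Proposition \ref{coef} plus the coset argument) is exactly the intended bookkeeping. One small observation: your subgroup/coset argument actually derives the conclusion—indeed the stronger fact that the entire $\T$-coloring of $D_0$ is determined by the top assignment—from Proposition \ref{coef} and linearity alone, so the appeal to Proposition \ref{distinct} is logically redundant in your write-up.
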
 

\begin{example}  Consider the 1-periodic graph $G$ and its associated tangle diagram $D_0$ in Figure \ref{tangle}. 
It is easy to see that $\De_G(x) = -3x+6-3x^{-1}$, which has leading coefficient $-3$.  
The group $B$ of the associated tangle has generators $a_0, a_1, b_0, b_1, c$ and relations $2b_0 = a_0 +c,
2c = b_0 + a_1, 2 a_1 = c + b_1$. 

\begin{figure}[H]
\begin{center}
\includegraphics[height=2 in]{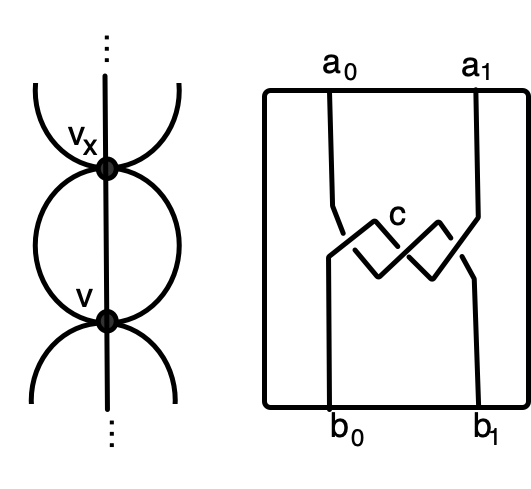}
\caption{1-peridodic graph $G$ and associated tangle diagram $D_0$}
\label{tangle}
\end{center}
\end{figure}

Since the subgroup $f(U)$ is generated by $a_0, a_1$, the quotient $B/f(U)$ is generated by $b_0, b_1, c$, with relations $2 b_0 = c, 2c = b_0, 0 = c + b_1$. If we choose 
a color $\g \in \T$ for the arc of $D_0$ labeled $c$, then the bottom arcs, labeled $b_0, b_1$,  must receive colors $2 \g, -\g$, respectively. Moreover, the assignment is a $\T$-coloring of $D_0$ provided that $3 \g = 0$. Hence $\g = 0, 1/3, 2/3$ (mod 1),
and there are exactly three $\T$-colorings of $D_0$ with the top arcs colored $0$, as expected from Proposition \ref{coef}. The three $\T$-colorings appear in Figure \ref{ctangle}.

\begin{figure}[H]
\begin{center}
\includegraphics[height=1.5 in]{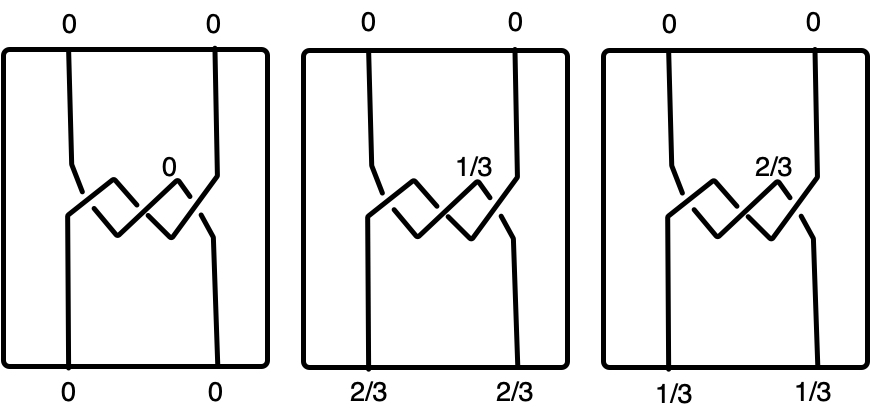}
\caption{$\T$-colorings with top arcs colored trivially}
\label{ctangle}
\end{center}
\end{figure}

\end{example}

\begin{example}\label{braid}
 The closure $\o \ell$ of any $2n$-braid arises from graph $\o G$ embedded in the annulus $\A$, since we can checkerboard shade a diagram of $\o \ell$ in $\A$ so that the border regions are unshaded.

The graph $\o G$ lifts to a 1-periodic graph $G$ in the plane. Consider its Laplacian polynomial $\De_G$.
We recall that the Burau representation associates to each generator $\si_i$ of the $2n$-braid group a block diagonal matrix 
$$I_{i-1} \oplus \begin{pmatrix} 1-t & t \\ 1 & 0 \end{pmatrix} \oplus I_{2n -i-1},$$ where $I_k$ denotes the $k \times k$ identity matrix.
Setting $t=-1$ produces presentation matrix for the group $B$ in the proof of Theorem \ref{alex}. The Laplacian polynomial 
of $G$ is the characteristic polynomial of the Burau matrix of the braid with $t=-1$. 

\end{example}

\section{Complexity growth of periodic graphs} \label{knots} 
%We present some results about unsigned $d$-periodic graphs. They will not surprise some experts, but, as far as we know, they have not appeared elsewhere. In particular, we will show that our restatement of Lehmer's question in Theorem \ref{LQ} is not valid if we restrict ourselves to unsigned graphs. \bigskip
When $G$ is a $d$-periodic graph with quotient $\o G$, we can consider the intermediate covering graphs  $G_\La$ in $\mathbb R^d/\La$, where $\La \subset \Z^d$ is a subgroup of $\La$ having finite index.
In this section we see that the growth of the torsion complexity $\k_{G_\La}$ as the index of $\La$ goes to infinity is determined by the Mahler measure of the Laplacian polynomial $\De_G$. 

We begin by reviewing the Mahler measure of polynomials.

\begin{definition} \label{mahler} The \emph{Mahler measure} of a nonzero polynomial 
$f(x_1, \ldots, x_d) \in \Rd$ is 
\begin{equation*} M(f) =\exp \int_0^1 \ldots \int_0^1 \log|f(e^{2\pi i \theta_1}, \ldots, e^{2\pi i \theta_d})| d\theta_1 \cdots d\theta_d. \end{equation*}

\end{definition} 

\begin{remark} (1)  The integral in Definition \ref{mahler} can be singular, but nevertheless it converges. 
(See \cite{EW99} for two different proofs.)  If $u_1, \ldots, u_d$ is another basis for $\Z^d$, then $f(u_1, \ldots, u_d)$ has the same logarithmic Mahler measure as $f(x_1, \ldots, x_d)$.  \smallskip

(2) If $f, g \in \Rd$, then $M(fg) = M(f)M(g)$. Moreover, $M(f) =1$ if and only if $f$ is a unit or a unit times a product of 1-variable cyclotomic polynomials, each evaluated at a monomial of $\Rd$
(see \cite{Sc95}). 

(3) When $d=1$, Jensen's formula shows that $M(f)$ can be described in a simple way. If 
$f(x) = c_s x^s+ \cdots c_1 x + c_0$, $c_0c_s \ne 0$,
$c$ is the leading coefficient of $f$, then
\begin{equation*} M(f) = |c_s| \prod_{i=1}^s \max \{ |\lambda_i|, 1\},\end{equation*}
where $\lambda_1, \ldots, \lambda_s$ are the roots of $f$. \smallskip

\end{remark}

\begin{theorem} \label{limit} If $G$ is a signed $d$-periodic graph with nonzero Laplacian polynomial $\De_G$,  then  
\begin{equation}  \limsup_{\langle \La \rangle \to \infty} \frac{1}{|\Z^d/\La|} \log \k_{G_\La}=
\log M(\De_G), \end{equation}
where $\La$ ranges over all finite-index subgroups of $\Z^d$, and $\langle \La \rangle$  denotes the minimum length of a nonzero vector in $\La$. When $d=1$,  the limit superior can be replaced by an ordinary limit. 
\end{theorem}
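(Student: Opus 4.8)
The plan is to identify $\k_{G_\La}$ with an order of a finite group arising from the $\Z^d$-shift $\widehat{\L}_G = {\rm Col}_{\T,\Z^d}(G)$ and then invoke the known asymptotics for periodic points of algebraic $\Z^d$-actions. Concretely, the Laplacian module ${\L}_G$ is a finitely presented $\Rd$-module whose $0$th Fitting ideal is generated (up to units) by $\De_G$, since $L_G$ is square; dualizing, $\widehat{\L}_G$ is an algebraic dynamical system of the kind studied by Lind, Schmidt and Ward. For a finite-index subgroup $\La \subset \Z^d$, the sublattice-periodic points ${\rm Fix}_\La(\widehat{\L}_G)$ form a finite group precisely because $\De_G \neq 0$, and its cardinality equals (again because $L_G$ is square, so taking $\La$-coinvariants of the module is presented by the block-circulant matrix $L_{G_\La}$) the torsion complexity $\k_{G_\La}$ of the quotient graph $G_\La$. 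So the first step is the algebraic bookkeeping lemma: $\k_{G_\La} = |{\rm Fix}_\La(\widehat{\L}_G)| = |{\rm Tor}({\L}_{G_\La})|$, using that $L_{G_\La}$ is obtained from $L_G$ by the substitution-and-expansion that corresponds to inducing up along $\Z^d/\La$, and that its determinant is $\prod_{\rho} \De_G(\rho)$ over characters $\rho$ of $\Z^d/\La$, which is nonzero since $\De_G \not\equiv 0$ (the zero set of $\De_G$ on $\T^d$ has measure zero, and for all but finitely many $\La$ no character lands on it; the finitely many bad $\La$ contribute negligibly).

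The second step is to quote the Lind--Schmidt--Ward theorem (from \cite{LSW90}, cf.\ \cite{Sc95}) on the exponential growth rate of periodic points: for the principal algebraic $\Z^d$-action defined by a nonzero $f \in \Rd$,
\begin{equation*}
\limsup_{\langle \La \rangle \to \infty} \frac{1}{|\Z^d/\La|}\log |{\rm Fix}_\La| = \log M(f).
\end{equation*}
Since $\widehat{\L}_G$ is the dual of a module with a square presentation matrix of determinant $\De_G$, it is (up to a system with the same periodic-point counts) the principal action attached to $\De_G$; more carefully, one shows $|{\rm Fix}_\La(\widehat{\L}_G)| = \prod_{\rho}|\De_G(\rho)|$ directly from the block-circulant determinant formula, so no black box about non-principal actions is even needed. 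Combining with step one gives the $\limsup$ statement for general $d$.

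For the $d=1$ refinement, the plan is to replace the product $\prod_{\rho}|\De_G(\rho)|$ over $N$-th roots of unity with its limit as $N \to \infty$, where $\La = N\Z$. Writing $\De_G(x) = c\prod_i(x-\l_i)$ as in Remark~(3), we have $\prod_{\zeta^N=1}|\De_G(\zeta)| = |c|^N \prod_i |\l_i^N - 1|$, and $\frac{1}{N}\log\prod_i|\l_i^N-1| \to \sum_i \log^+|\l_i|$ because $|\l_i^N - 1|^{1/N} \to \max\{|\l_i|,1\}$ for each root (the roots on the unit circle, being roots of an integer polynomial, need the standard estimate that $\prod_{\zeta^N=1}|\zeta - \l| = |\l^N - 1|$ does not decay exponentially; this is where one uses that $\De_G$ has no cyclotomic-type pathology forcing a root exactly at a root of unity for infinitely many $N$ simultaneously, handled by Baker's theorem or, more elementarily, by the fact that a fixed algebraic number is a root of unity or stays a definite distance from the $N$-th roots of unity only in an averaged sense — the averaged sense is exactly what the product computes, so the limit exists unconditionally). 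Hence $\frac{1}{N}\log\k_{G_{N\Z}} \to \log|c| + \sum_i\log^+|\l_i| = \log M(\De_G)$, an honest limit.

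The main obstacle I expect is the careful treatment of the subsequences of $\La$ for which some character $\rho$ of $\Z^d/\La$ lies on (or very near) the amoeba/zero set of $\De_G$ on $\T^d$: there $|\De_G(\rho)|$ is small and could in principle depress the average, which is exactly why only a $\limsup$ (not $\liminf$) is claimed for $d \geq 2$. One must show these near-vanishings are not too frequent and not too severe — this is the heart of the Lind--Schmidt--Ward analysis (controlling $\int \log|\De_G|$ over small balls and counting lattice characters in a neighborhood of the zero set), and I would simply cite it rather than reprove it. For $d=1$ the zero set is finite, the obstruction disappears in the limit by the elementary estimate above, and that is why the limit upgrades from $\limsup$ to a genuine limit.
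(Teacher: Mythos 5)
There is a genuine gap at your ``bookkeeping lemma.'' For a graph Laplacian the row sums of $L_G$ vanish when $x_1=\cdots=x_d=1$, so $\De_G(1,\ldots,1)=0$ for \emph{every} signed $d$-periodic graph; hence the trivial character of $\Z^d/\La$ lies on the zero set of $\De_G$ for every finite-index subgroup $\La$. Consequently $\det L_{G_\La}=\prod_\rho \De_G(\rho)=0$ for all $\La$, the quotient module ${\L}_G/\La{\L}_G\cong{\L}_{G_\La}$ has free rank $\b_\La\ge 1$, and the group of $\La$-periodic points of $\widehat{\L}_G$ is a disjoint union of $|T({\L}_G/\La{\L}_G)|$ tori of dimension $\b_\La$ --- it is never finite. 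So the identities you build the proof on, $\k_{G_\La}=|{\rm Fix}_\La(\widehat{\L}_G)|=\prod_\rho|\De_G(\rho)|$, fail (the middle quantity is infinite, the right-hand one is zero), your claim that ``for all but finitely many $\La$ no character lands on the zero set'' is false, and in the $d=1$ step the product $\prod_{\zeta^N=1}|\De_G(\zeta)|$ you propose to estimate is identically zero because of the factor at $\zeta=1$. This degeneracy is precisely why complexity is defined via the \emph{torsion} subgroup, and it is not exceptional: in Example \ref{circ} of the paper, $\De_G=9(x-2+x^{-1})$ and every finite quotient has nullity $2$, yet the theorem still applies.

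The repair --- and the paper's actual argument --- is to count connected components of the periodic-point groups rather than periodic points: the $\La$-periodic points are dual to ${\L}_G/\La{\L}_G\cong\Z^{\b_\La}\oplus T({\L}_G/\La{\L}_G)$, and Theorem 21.1 of \cite{Sc95} (Lind--Schmidt--Ward \cite{LSW90}) identifies the topological entropy of the dual $\Z^d$-action with $\limsup_{\langle\La\rangle\to\infty}\frac{1}{|\Z^d/\La|}\log P_\La$, where $P_\La=|T({\L}_G/\La{\L}_G)|=\k_{G_\La}$ is the number of components; since the presentation matrix $L_G$ is square, Example 18.7(1) of \cite{Sc95} gives the entropy as $\log M(\det L_G)=\log M(\De_G)$, and the same sources give the genuine limit when $d=1$ (where one needs the removal of cyclotomic contributions and a Gelfond/Baker-type lower bound on $|\l^N-1|$ for unit-modulus algebraic non-roots of unity; your ``the averaged sense is exactly what the product computes'' aside does not address this, and in any case the $\zeta=1$ factor already kills your product). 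Your overall framework (dualize, invoke periodic-point asymptotics) is the right one, but the reduction to a finite fixed-point count --- and with it your claim that no result beyond the principal/finite case is needed --- does not survive the unavoidable vanishing of $\De_G$ at the trivial character, so as written the argument establishes nothing about $\k_{G_\La}$.
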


We call this limit the {\it complexity growth rate} of $G$, and denote it by $\g_G$.  Its relationship to the {\it thermodynamic limit} or {\it bulk limit} defined for a wide class of unsigned lattice graphs is discussed in  \cite{LSW14}, and also below in Section \ref{unsigned}. 

\begin{remark} \label{remarks} (1) The condition $\langle \La \rangle \to \infty$ ensures that fundamental region of $\La$ grows in all directions. 

(2) If $G$ is unsigned,  $\k_{G_\La}= \t_{G_\La}$ for every $\La$. In this case, Theorem \ref{limit} is proven in \cite{Ly05} with the limit superior replaced by ordinary limit. 

(3) When $d=1$, the finite-index subgroups $\La$ are simply $\Z/r\Z$, for $r >0$. In this case, we write $G_r$ instead of $G_\La$.

(4) When $d>1$,  a recent result of V. Dimitrov \cite{Di16} asserts that the limit superior in Theorem \ref{limit} is equal to the ordinary limit along sequences of sublattices $\La$ of the form $N\cdot \Z^d$, where $N$ is a positive integer. 

 \end{remark}

 \begin{figure}
\begin{center}
\includegraphics[height=3 in]{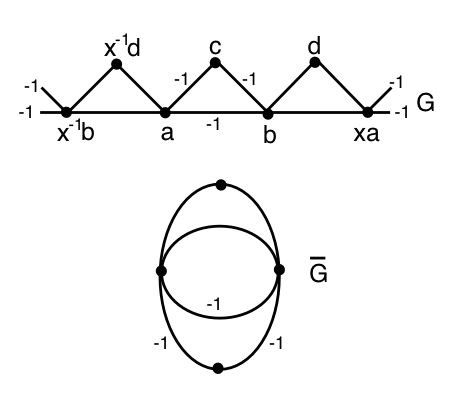}
\caption{1-Periodic graph $G$ with $\t_{G_r}=0$ for all $r \ge 1$}
\label{circulantdiagram}
\end{center}
\end{figure}

Before proving Theorem \ref{limit} we give an example that demonstrates the need for defining graph complexity as we do. 

\begin{example} \label{circ} Consider the 1-periodic graph $G$ in Figure \ref{circulantdiagram}. Generators for the Laplacian module are indicated. The Laplacian matrix is 
$$L_G= \begin{pmatrix} 0 &  1-x^{-1} & 1 & -x^{-1} \\ 1-x & 0 & 1 & -1 \\
1 & 1 & -2 & 0 \\ -x & -1 & 0 & 2 \end{pmatrix},$$ and 
$\De_G(x) = 9 (x-2+x^{-1}).$

The quotient  $G_2$ is the finite graph in Example \ref{milnor}. The  Laplacian matrix of any $G_r$ can be described as a block matrix obtained from $L_G$ by replacing $x$ by the companion (permutation) matrix for $x^r -1$, and any scalar $c$ by $c I_r$ (see \cite{SW00'}). It is conjugate to the diagonal block matrix ${\rm Diag}[L_G|_{x=1}, \ldots, L_G|_{x=\zeta^{r-1}}]$, where $\zeta$ is a primitive $r$th root of unity. The matrix $L_G|_{x=1}$ is the $4 \times 4$ Laplacian matrix of $\o G$, 
$$ L_{\o G}= \begin{pmatrix} 0 &  0 & 1 & -1\\ 0 & 0 & 1 & -1 \\
1 & 1 & -2 & 0 \\ -1 & -1 & 0 & 2 \end{pmatrix},$$ which has nullity 2. 
Hence the tree complexity $\t_{G_r}$ vanishes for every $r$. Nevertheless, by Theorem \ref{limit} the (torsion) complexity $\k_{G_r}$ is nontrivial and has exponential growth rate equal to $9$. One can verify directly that the Laplacian subgroup ${\cal L}_{G_r}$ is 
isomorphic to $\Z^2 \times (\Z/ 3^{r-1}\Z)^2.$\end{example} 

We proceed with the proof of Theorem \ref{limit}. 
 
 \begin{proof} The proof that we present is a direct application of a  theorem of D. Lind, K. Schmidt and T. Ward (see \cite{LSW90} or  Theorem 21.1 of \cite{Sc95}). We review the ideas for the reader's convenience. 
 
Recall that the Laplacian module ${\L}_G$ is the finitely generated module over the ring $\Rd$ with presentation matrix equal to the $n \times n$ Laplacian matrix $L_G$, and its Pontryagin dual group $\widehat {\L}_G$ is ${\rm Hom}({\L}_G, \T)$. The module actions of $x_1, \ldots, x_d$ determine commuting homeomorphisms $\s_1, \ldots, \s_d$ of $\widehat {\L}_G$. Explicitly, $(\s_j \rho)(a) = \rho(x_j a)$ for every $a \in {\L}_G$. Consequently, $\widehat {\G}_G$ has a $\Z^d$-action $\s: \Z^d \to \text{Aut}(\widehat {\L}_G)$.

The pair $(\widehat {\L}_G, \si)$ is an algebraic dynamical system, well defined up to topological conjugacy (that is, up to a homeomorphism of $\widehat {\L}_G$ respecting the $\Z^d$ action). In particular its periodic point structure is well defined.  

Topological entropy $h(\s)$ is a well-defined quantity associated to $(\widehat {\L}_G, \s)$, a measure of complexity of the $\Z^d$-action $\s$. We refer the reader to \cite{LSW90} or \cite{Sc95} for the definition.

For any subgroup $\La$ of $\Z^d$, a $\La$-periodic point is a member of $\widehat {\L}_G$ that is fixed by every element of $\La$. The set of $\La$-periodic points is a finitely generated abelian group isomorphic to the Pontryagin dual group ${\rm Hom}({\L}_G/\La {\L}_G), \T)$.

The group ${\L}_G/\La {\L}_G$ is the Laplacian module of the quotient graph $G_\La$. As a finitely generated abelian group, it decomposes 
as $\Z^{\b_\La} \oplus T({\L}_G/\La {\L}_G)$, where $\b_\La$ is the 
rank of ${\L}_G/\La {\L}_G$ and $T( \cdots )$ denotes the (finite) torsion subgroup. The Pontryagin dual group consists of $P_\La = |T({\L}_G/\La {\L}_G)|$ tori each of dimension $\b_\La$. By Theorem 21.1 of \cite{Sc95}, the topological entropy $h(\s)$ is:  \begin{equation*} h(\s) = \limsup_{\langle \La \rangle \to \infty}  \frac{1}{|\Z^d/\La|} \log P_\La =\limsup_{\langle \La \rangle \to \infty}  \frac{1}{|\Z^d/\La|} \log \k_\La.  \end{equation*}
Since the matrix $L_G$ that presents ${\L}_G$ is square, $h(\s)$ can be computed also as the logarithm of the Mahler measure $M(\det L_G)$ (see Example 18.7(1) of \cite{Sc95}).  The determinant of $L_G$ is, by definition, the Laplacian polynomial $\De_G$. Hence the proof is complete. 
\end{proof}

\section{Lehmer's question} In \cite{Le33} D.H. Lehmer 
asked the following question. 

\begin{question} Do there exist  integral polynomials with Mahler measures arbitrarily close but not equal to 1?  \end{question} 

Lehmer discovered the polynomial $x^{10} + x^9 - x^7 - x^6 - x^5 - x^4 - x^3 +x +1,$ which has Mahler measure equal to $1.17628...$.  Despite great effort including extensive computer-aided searches \cite{Bo80, Bo81, MS12, Mo98, Ra94}, no smaller value greater than 1 has been found, and Lehmer's question remains unanswered.

Topological and geometric perspectives of Lehmer's question have been found \cite{Hi03}.  In \cite{SW07} we showed that Lehmer's question is equivalent to a question about Alexander polynomials of fibered hyperbolic knots in the lens spaces $L(n, 1), n>0$. (Lens spaces arose from the need to consider polynomials $f(x)$ with $f(1) =n \ne 1$.) 
Here we present another, more elementary equivalence, in terms of graph complexity.

An integer polynomial $f(x)$ is \emph{reciprocal} if $x^{{\deg}f}f(x^{-1}) = f(x)$. 
We will say that a Laurent polynomial $f(x) \in {\cal R}_1$ is \emph{palindromic} if 
$f(x^{-1}) = f(x)$. Any reciprocal polynomial becomes palindromic after it is multiplied by $ x^j$ or $x^j(x+1)$, for suitable $j$. In \cite{Sm71} C. Smyth proved that any irreducible integral non-reciprocal polynomial other than $x$ or $x-1$ has Mahler measure at least as large as the real root of $x^3-x-1$ (approximately 1.324). Since Mahler measure is multiplicative, it suffices to restrict our attention to palindromic Laurent polynomials when investigating Lehmer's question.  

\begin{prop} \label{realized} A polynomial $\De(x)$ is the Laplacian polynomial of a 1-periodic graph if and only if it has the form $(x-2+x^{-1}) f(x)$, where $f(x)$ is a palindromic polynomial. 
\end{prop}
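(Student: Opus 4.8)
The plan is to prove the two implications separately. The ``only if'' direction will follow directly from Theorem~\ref{poly}, while the ``if'' direction I would handle by an explicit and very simple construction, which is where essentially all the content lies.

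For ``only if'', let $G$ be a $1$-periodic graph. By Theorem~\ref{poly}, $\De_G$ is palindromic, since each factor $2-\phi-\phi^{-1}$ in the formula is invariant under $x\mapsto x^{-1}$; moreover $\De_G$ is a $\Z$-linear combination of products of factors $2-x^{j}-x^{-j}$ ($j\in\Z$), one product for each cycle-rooted spanning forest of $\o G$, and each such product is nonempty because every component of a CRSF has a cycle. If $\De_G\equiv0$ we take $f=0$. Otherwise any summand containing a factor with $j=0$ vanishes (as $2-1-1=0$), so each surviving summand is a product of factors $2-x^{j}-x^{-j}$ with $j\neq0$; since
\[
2-x^{j}-x^{-j}=-(x-2+x^{-1})\,x^{-(|j|-1)}\bigl(1+x+\cdots+x^{|j|-1}\bigr)^{2},
\]
every summand, hence $\De_G$ itself, is divisible by $x-2+x^{-1}$ in $\Z[x^{\pm1}]$. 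Then $f:=\De_G/(x-2+x^{-1})$ is a quotient of palindromic Laurent polynomials, hence palindromic.

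For ``if'', I would realize the prescribed polynomials using ``bouquets of loops''. For $k\ge1$ put $q_k(x):=x^{-(k-1)}(1+x+\cdots+x^{k-1})^{2}$; this is palindromic, with span exactly $[-(k-1),k-1]$ and extreme coefficients $1$. Since a palindromic Laurent polynomial of span contained in $[-(N-1),N-1]$ lies in the rank-$N$ free $\Z$-module with basis $\{1,x+x^{-1},\dots,x^{N-1}+x^{-(N-1)}\}$, and the transition to $\{q_1,\dots,q_N\}$ is unitriangular, the family $\{q_k\}_{k\ge1}$ is also a $\Z$-basis of the ring of palindromic Laurent polynomials. Hence I may write the given $f$ as $-f=\sum_k a_k q_k$ with finitely many nonzero $a_k\in\Z$. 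Let $G$ be the $1$-periodic graph whose quotient $\o G$ consists of a single vertex carrying, for each $k$, exactly $|a_k|$ loops of winding number $k$, each of sign $+1$ if $a_k>0$ and $-1$ if $a_k<0$. (This $G$ need not be planar, which is precisely the extra freedom the statement allows; if $f\neq0$ then some $a_k\neq0$, so $G$ has no isolated vertex, and if $f=0$ one instead takes any $G$ with a finite component, which has $\De_G\equiv0$.) By the definitions of Section~\ref{modules}, a loop of winding number $k$ and sign $\epsilon$ contributes $2\epsilon$ to the signed degree and $\epsilon(x^{k}+x^{-k})$ to the single adjacency entry, so the $1\times1$ Laplacian matrix of $G$ is
\[
\sum_i \epsilon_i\bigl(2-x^{k_i}-x^{-k_i}\bigr)=-(x-2+x^{-1})\sum_i \epsilon_i q_{k_i}(x)=-(x-2+x^{-1})\Bigl(\sum_k a_k q_k(x)\Bigr)=(x-2+x^{-1})f(x),
\]
as required.

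The only step that is not routine is this choice of building blocks: one must recognize that bouquets of loops of arbitrary winding numbers and signs realize exactly the $\Z$-span of the polynomials $(x-2+x^{-1})q_k$, and that the $q_k$ already form a $\Z$-basis of the palindromic subring $\Z[x+x^{-1}]\subset\Z[x^{\pm1}]$. Everything after that is bookkeeping; the only computations I would want to double-check are the loop contributions to the signed degree and adjacency matrices (loops count twice toward the degree and contribute two mutually inverse monomials to the adjacency matrix) and the normalization $q_1=1$, which is what lets the construction account for the constant term of $f$.
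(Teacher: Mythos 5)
Your proof is correct and follows essentially the same route as the paper: the forward direction via the symmetry and divisibility properties visible in Theorem~\ref{poly} (the paper derives them from $L_G^T(x)=L_G(x^{-1})$ and vanishing row sums at $x=1$, but notes they also follow from Theorem~\ref{poly}), and the converse via a single-vertex quotient with signed loops of various winding numbers, exactly the construction of Example~\ref{circulant}. Your expansion of $-f$ in the basis $\{q_k\}$ is just a repackaging of the paper's step of writing $(x-2+x^{-1})f(x)$ as an integer combination of the terms $\pm(x^s-2+x^{-s})$ with vanishing constant term.
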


\begin{proof} The Laplacian polynomial $\De(x)$ of any $1$-periodic graph is palindromic.  This follows from the fact that the transpose of $L_G$ is $L_G$ with x replaced by $x^{-1}$.  Since the row-sums of $L_G$ become zero when we set $x=1$, $x-1$ divides $\De(x)$. (Both observations  follow also from Theorem \ref{poly}.) 
Palindromicity requires that the multiplicity of $x-1$ be even. Hence $\De(x)$ has the form $(x-2+x^{-1})f(x)$, where $f(x)$ is palindromic. 

In order to see the converse assertion, consider any polynomial of the form $p(x)=(x-2+x^{-1})f(x)$, where $f(x)$ is palindromic. Then $p(x)$ is also palindromic. Clearly, we can write $p(x)$
as a constant plus a sum of terms $\pm(x^s-2+x^{-s})$; but the constant must be 0 since $p(1)=0$. Then $p(x)$ is the Laplacian polynomial of a 1-periodic graph, constructed as in the following example. 
\end{proof} 

\begin{example} \label{circulant} Multiplying Lehmer's polynomial $f(x)= x^{10} + x^9 - x^7 - x^6 - x^5 - x^4 - x^3 +x +1$ by the unit $x^{-5}$ and then by $x-2+x^{-1}$ yields
$x^6-x^5-x^4+x^2+x^{-2}-x^{-4}-x^{-5}+x^{-6},$
which in turn can be written as 
$$(x^2-2+x^{-2}) -(x^4-2+x^{-4})-(x^5-2+x^{-5})+ (x^6 -2 + x^{-6}).$$
This is the Laplacian polynomial of a 1-periodic graph $G$. The quotient graph $\o G$ is easily described. It has a single vertex, two edges with sign $+1$ and two with $-1$. The $(+1)$-signed edges wind twice and six times, respectively,  around the annulus in the direction corresponding to $x$. The $(-1)$-signed edges wind four and five times, respectively, in the opposite direction.
\end{example}

\begin{theorem} \label{LQ} Lehmer's question is equivalent to the following. 
Given $\e >0$, does there exist a 1-periodic graph $G$ such that 
$$1 < \lim_{r \to \infty}  (\t_{G_r})^{1/r} < 1+\e?$$ 
\end{theorem}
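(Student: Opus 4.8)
The plan is to establish the equivalence in two directions, using the dictionary already built up in the paper between $1$-periodic graphs, their Laplacian polynomials, and Mahler measure. First, by Theorem \ref{limit} (and Remark \ref{remarks}(3)), for any $1$-periodic graph $G$ we have $\lim_{r\to\infty}\k_{G_r}^{1/r}=M(\De_G)$, and whenever this quantity is realized by a \emph{finite} tree complexity we have $\t_{G_r}=\k_{G_r}$; but more to the point, the quantity appearing in the statement is $\lim_{r\to\infty}(\t_{G_r})^{1/r}$, so the first order of business is to argue that, for the graphs that matter, $\t_{G_r}$ and $\k_{G_r}$ have the same exponential growth rate. I would handle this by restricting to the explicit family of graphs produced in Proposition \ref{realized} and Example \ref{circulant}: for a graph $G$ with a single vertex and edges winding around the annulus, $\o G$ is connected and the relevant principal minors of $L_{G_r}$ are (up to sign) the spanning-tree counts, so $\t_{G_r}>0$ for all large $r$ and hence $\t_{G_r}=\k_{G_r}$; then $\lim_r(\t_{G_r})^{1/r}=M(\De_G)$ directly from Theorem \ref{limit}.

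For the direction ``Lehmer's question has a positive answer $\Rightarrow$ the graph-theoretic statement has a positive answer'': given $\e>0$ and an integral polynomial $h(x)$ with $1<M(h)<1+\e$, I would first reduce $h$ to a palindromic Laurent polynomial without changing its Mahler measure. By Smyth's theorem \cite{Sm71}, any non-reciprocal irreducible integral polynomial other than $x,x-1$ has Mahler measure $\ge 1.32\ldots$, so if $M(h)$ is close enough to $1$ all its irreducible factors (other than trivial ones) are reciprocal; discarding the units $x$ and $x-1$ (which have Mahler measure $1$ and don't affect the product) and multiplying by a suitable power of $x$ or $x^j(x+1)$, one obtains a palindromic $f(x)$ with $M(f)=M(h)$. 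Then $\De(x)=(x-2+x^{-1})f(x)$ satisfies $M(\De)=M(x-2+x^{-1})\cdot M(f)=1\cdot M(f)=M(h)$, since $x-2+x^{-1}=x^{-1}(x-1)^2$ has Mahler measure $1$. By Proposition \ref{realized} this $\De$ is the Laplacian polynomial of an explicit $1$-periodic graph $G$ of the circulant type in Example \ref{circulant}; for that $G$ the first paragraph gives $\lim_r(\t_{G_r})^{1/r}=M(\De)=M(h)\in(1,1+\e)$, as required.

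For the converse direction ``graph-theoretic statement $\Rightarrow$ Lehmer positive'': given $\e>0$ and a $1$-periodic graph $G$ with $1<\lim_r(\t_{G_r})^{1/r}<1+\e$, we have $\t_{G_r}\le\k_{G_r}$ always, and where $\t_{G_r}\ne 0$ they are equal; in any case $\lim_r(\t_{G_r})^{1/r}\le\lim_r(\k_{G_r})^{1/r}=M(\De_G)$ by Theorem \ref{limit}, and since $\t_{G_r}>0$ for infinitely many $r$ (the limit exceeds $1$), one gets $M(\De_G)=\lim_r(\t_{G_r})^{1/r}\in(1,1+\e)$. The polynomial $\De_G(x)\in\Z[x^{\pm1}]$ is a Laurent polynomial; clearing denominators by a unit gives an honest integral polynomial with the same Mahler measure strictly between $1$ and $1+\e$. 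Since $\e>0$ was arbitrary, Lehmer's question has a positive answer. (Conversely, if Lehmer's question has a \emph{negative} answer, i.e. no integral polynomial has Mahler measure in $(1,1+\e_0)$ for some $\e_0$, then the displayed inequality can never hold for that $\e_0$, completing the logical equivalence.)

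The main obstacle I anticipate is the first paragraph: making sure that the passage from $\k_{G_r}$ (which Theorem \ref{limit} controls) to $\t_{G_r}$ (which appears in the statement) is legitimate. Theorem \ref{limit} is about $\k$, and Example \ref{circ} shows $\t_{G_r}$ can vanish identically even when $\k_{G_r}$ grows exponentially, so one genuinely cannot quote Theorem \ref{limit} verbatim. The fix is to observe that in the realization direction we have complete freedom to choose $G$, and the one-vertex circulant graphs of Example \ref{circulant} have connected quotients with $\o G$ whose Laplacian $L_{\o G}$ (the $x=1$ specialization, a $1\times1$ zero matrix) forces a careful count, but the key point is that for these graphs the relevant cycle-rooted spanning forests in Theorem \ref{poly} give $\De_G$ with nonzero constant term when evaluated appropriately at roots of unity for all but finitely many $r$, so $\t_{G_r}\ne 0$ eventually. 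In the converse direction the subtlety is milder, since $\lim_r(\t_{G_r})^{1/r}>1$ already forces $\t_{G_r}\ne 0$ infinitely often and a Fekete-type subadditivity argument on $\log\k_{G_r}$ pins the limit to $M(\De_G)$.
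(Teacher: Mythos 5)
Your overall route is the same as the paper's: reduce Lehmer's question to palindromic polynomials via Smyth's theorem, realize $(x-2+x^{-1})f(x)$ as $\De_G$ for a one-vertex-orbit graph via Proposition \ref{realized} and Example \ref{circulant}, argue that $\t_{G_r}=\k_{G_r}$, and invoke Theorem \ref{limit}. The difference is precisely at the step you yourself flag as the main obstacle, and there your argument has a genuine gap. In the realization direction you discard only the factors $x$ and $x-1$ from $h$, so the palindromic $f$ you build may still contain cyclotomic factors (they are reciprocal and of Mahler measure $1$, so nothing in your reduction removes them). If $f$ has a root at a primitive $k$-th root of unity with $k>1$, then $\De_G=(x-2+x^{-1})f(x)$ vanishes at more than one $r$-th root of unity whenever $k\mid r$; since $L_{G_r}$ is conjugate to the diagonal matrix with entries $\De_G(\zeta^j)$ over the $r$-th roots of unity, its nullity is then at least $3$ along that infinite arithmetic progression of $r$, every $(r-1)\times(r-1)$ minor vanishes, and $\t_{G_r}=0$ infinitely often. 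So your claim that ``$\t_{G_r}>0$ for all large $r$'' is false for the graphs as you construct them, and the limit in the statement of the theorem does not even exist. Your proposed repair (``the cycle-rooted spanning forests in Theorem \ref{poly} give $\De_G$ with nonzero constant term when evaluated appropriately at roots of unity for all but finitely many $r$'') is not an argument and cannot be one: the bad values of $r$ recur periodically, not finitely often.

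The paper closes exactly this hole by taking $f$ to be a single irreducible reciprocal factor of measure greater than $1$ (hence, without loss of generality, not cyclotomic); then the only root of unity at which $\De_G$ vanishes is $x=1$, the nullity of $L_{G_r}$ is exactly $1$ for every $r$, and the discussion after Definition \ref{complexity} (Tutte) gives $\t_{G_r}=\k_{G_r}$ for every $r$, after which Theorem \ref{limit} with $d=1$ (ordinary limit) finishes. Your fix should be the same: pass to one irreducible factor of $h$ with Mahler measure in $(1,1+\e)$, or at least strip all cyclotomic factors, before applying Proposition \ref{realized}, and justify $\t_{G_r}\neq 0$ via the nullity-one argument rather than via minors ``being spanning-tree counts.'' Your converse direction is essentially correct and matches the paper's (terse) treatment; the only point worth making explicit is that the hypothesis $\lim_r(\t_{G_r})^{1/r}>1$ forces $\t_{G_r}=\k_{G_r}\neq 0$ for all large $r$ and hence $\De_G\not\equiv 0$, so Theorem \ref{limit} applies; the Fekete-type subadditivity you mention is unnecessary, since Theorem \ref{limit} already asserts an ordinary limit when $d=1$.
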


\begin{proof} When investigating Lehmer's question it suffices to consider polynomials of the form $(x-2+x^{-1})f(x)$, where $f(x)$ is  palindromic and irreducible. By Proposition \ref{realized} any such polynomial is realized as the Laplacian polynomial of a 1-periodic graph $G$ with a single vertex orbit. As in Example \ref{circ} the Laplacian matrix $L_{G_r}$ of any finite quotient $G_r$ can be obtained from $(x-2+x^{-1})f(x)$ by substituting for $x$ the companion matrix for $x^r-1$. Hence the nullity of $L_{G_r}$ is 1 provided that $f(x)$ is not a cyclotomic polynomial (multiplied by a unit), a condition that we can assume without loss of generality. 
Hence $\kappa_{G_r} = \tau_{G_r}$ for each $r$ (see discussion following Definition \ref{complexity}.) Theorem \ref{limit} completes the proof. 

\end{proof}

\begin{remark} (1) The closure of the 16-braid $(\si_1\si_2)^2(\si_1\si_2\si_3\si_4)^4(\si_1\si_2 \cdots \si_{15})^5$ is a link $\o \ell$ associated with a plane graph $\o G$ in the annulus (see Example \ref{braid}). The Laplacian polynomial $\De_G$ is 
$$x^{-7}(x-1)(x^2+1)(x^{10}-x^9-x^6+x^5-x^4-x+1).$$  Its Mahler measure is 1.35098$\ldots$. This is the smallest Mahler measure greater than 1 that we have yet found for any plane graph. 

(2) The conclusion of Theorem \ref{LQ} does not hold if we restrict ourselves to unsigned graphs. By Theorem \ref{absolute} below,  the Mahler measure of the Laplacian polynomial of any 1-periodic graph with all edge signs equal to 1 is at least 2. \bs

(3) If a 1-periodic graph $G$ as in Example \ref{circulant} can be found with $M(D_G)$ less than Lehmer's value $1.17628...$, then by results of  \cite{Mo08} some edge of $\o G$ must wind around the annulus at least 29 times. \end{remark}

The cyclic 5-fold cover of the graph in Example \ref{circulant} contains the complete graph on 5 vertices, and hence it is nonplanar.  If the answer to the following question is yes, then Lehmer's question is equivalent to a question about determinant density of links (see Remark \ref{rem}(3)).

\begin{question} \label{question1} Is Theorem \ref{LQ} still true if we require that the graphs $G$ be planar?  
\end{question}

We conclude this section with a result that will be used in the next section, but holds for signed as well as unsigned graphs.  
It concerns complexity growth of a $d$-periodic graph that is a union of disjoint $d'$-periodic graphs for some $d'<d$.

Suppose $H$ is a subgraph of  a $d$-periodic graph $G$ consisting of one or more connected components of $G$, such that the orbit of $H$ under $\Z^d$ is all of $G$.  Let 
$\Gamma < \Z^d$ be the stabilizer of $H$.  Then $\Gamma\cong \Z^{d'}$ for some $d' < d$, and its action on $H$ can be regarded as a cofinite free action of $\Z^{d'}$.   Consider the limit $$\g_H=\lim_{\<\La\>\to\infty} \frac{1}{|\Gamma/\La|} \log \k_{H_\La}$$
where $\La$ ranges over finite-index subgroups of $\Gamma$.

\begin{lemma}\label{new} Under the above conditions we have $\g_G=\g_H$.
\end{lemma}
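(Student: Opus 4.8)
The plan is to relate the complexity growth of $G$ along a sequence of sublattices $\La < \Z^d$ to the complexity growth of $H$ along the corresponding sublattices of $\Gamma$, using the fact that $G$ is a disjoint union of $\Z^d$-translates of $H$ and that the number of translates needed is exactly $[\Z^d:\Gamma]$. First I would fix a set of coset representatives $t_1,\dots,t_k \in \Z^d$ for $\Z^d/\Gamma$, so that $G = t_1 H \sqcup \cdots \sqcup t_k H$ as graphs (the $t_i H$ are the distinct components-orbits-under-$\Gamma$ that make up $G$). For a finite-index subgroup $\La < \Z^d$, the quotient graph $G_\La = G/\La$ decomposes according to which translates $t_i H$ are identified by $\La$: concretely, $\La$ acts on the set $\{t_1,\dots,t_k\}$ of cosets through the quotient $\Z^d/\Gamma$, and each $\La$-orbit of cosets contributes one connected piece of $G_\La$, which is a quotient of $H$ by the finite-index subgroup $\La \cap t^{-1}\Gamma t = \La\cap\Gamma$ (after conjugating, using that $\Gamma$ is normal in the abelian group $\Z^d$) — i.e.\ a copy of $H_{\La\cap\Gamma}$. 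Since complexity is multiplicative over connected components (the Laplacian matrix is block diagonal, exactly as in Proposition~\ref{components} and the $d=0$ definition), we get
\begin{equation*}
\k_{G_\La} = \big(\k_{H_{\La\cap\Gamma}}\big)^{[\Z^d:\Gamma\La]},
\end{equation*}
where $[\Z^d:\Gamma\La]$ is the number of $\La$-orbits on $\Z^d/\Gamma$ (equivalently the index of the subgroup $\Gamma\La$).

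Next I would take logarithms and divide by $|\Z^d/\La|$. Writing $|\Z^d/\La| = [\Z^d:\Gamma\La]\cdot[\Gamma\La:\La] = [\Z^d:\Gamma\La]\cdot[\Gamma:\Gamma\cap\La]$, we obtain
\begin{equation*}
\frac{1}{|\Z^d/\La|}\log \k_{G_\La} = \frac{1}{[\Gamma:\Gamma\cap\La]}\log \k_{H_{\Gamma\cap\La}}.
\end{equation*}
So the normalized log-complexity of $G_\La$ equals the normalized log-complexity of $H$ at the sublattice $\Gamma\cap\La < \Gamma$. Now I must pass to the limit. As $\langle\La\rangle\to\infty$ in $\Z^d$, the sublattice $\La':=\Gamma\cap\La$ of $\Gamma$ also satisfies $\langle\La'\rangle\to\infty$ (a shortest nonzero vector in $\Gamma\cap\La$ is in particular a nonzero vector in $\La$, hence has length $\ge\langle\La\rangle$; here I use that $\Gamma$ has finite index so it contains nonzero vectors and the intersection is still a full-rank sublattice of $\Gamma$). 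Hence every subsequential value of the left-hand side is a subsequential value of $\tfrac{1}{|\Gamma/\La'|}\log\k_{H_{\La'}}$, giving $\g_G \le \g_H$ by taking $\limsup$; and since $\g_H$ is a genuine limit (guaranteed when $d'=1$ by Theorem~\ref{limit}, and in general one works with the $\limsup$ definition of $\g$ consistently on both sides), conversely, given any sequence $\La'_j < \Gamma$ with $\langle\La'_j\rangle\to\infty$ computing $\g_H$, each $\La'_j$ is already a finite-index subgroup of $\Z^d$ with $\langle\La'_j\rangle\to\infty$, and plugging $\La=\La'_j$ into the displayed identity (note $\Gamma\cap\La'_j=\La'_j$) shows these values also occur among the $G$-side, giving $\g_G\ge\g_H$. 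Combining, $\g_G=\g_H$.

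The main obstacle I anticipate is bookkeeping the combinatorics of how $\La$ acts on $\Z^d/\Gamma$ and verifying cleanly that each connected piece of $G_\La$ really is isomorphic to $H_{\Gamma\cap\La}$ — in particular checking that the stabilizer computation gives $\Gamma\cap\La$ uniformly (independent of which coset $t_i$ one starts from), which is where normality of $\Gamma$ in the abelian group $\Z^d$ is used. A secondary subtlety is the interchange of $\limsup$'s: one should either invoke that for $d=1$ (the case actually needed in the next section, per Remark~\ref{remarks}(3)) the relevant limits exist by Theorem~\ref{limit}, or phrase the whole argument with $\limsup$ throughout and observe that the identity above is an exact equality at the level of each $\La$, so it descends to $\limsup$ without loss. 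I would also remark that an alternative, slicker proof is available via Theorem~\ref{limit} itself: by Proposition~\ref{components}, $\De_G = \prod \De_{t_iH}$, and each translate has the same Laplacian polynomial as $H$ (translation by $t_i$ is a monomial change of variables, which by Remark~\ref{remarks} following Definition~\ref{mahler} preserves Mahler measure), so $M(\De_G) = M(\De_H)^{k}$ with $k=[\Z^d:\Gamma]$; combined with the analogous statement relating $M(\De_H)$ to $\g_H$ over $\Gamma\cong\Z^{d'}$ and the normalization constants, this gives $\g_G=\g_H$ directly, at least when $\De_H\ne 0$.
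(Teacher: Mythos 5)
Your main argument is the same as the paper's: decompose $G_\La$ into $k=|\Z^d/\La|\,/\,|\G/(\La\cap\G)|$ disjoint copies of $H_{\La\cap\G}$, deduce $\k_{G_\La}=\k_{H_{\La\cap\G}}^{k}$, divide by $|\Z^d/\La|$ to get $\frac{1}{|\Z^d/\La|}\log\k_{G_\La}=\frac{1}{|\G/(\La\cap\G)|}\log\k_{H_{\La\cap\G}}$, and let $\<\La\>\to\infty$ using $\<\La\cap\G\>\ge\<\La\>$. That part is correct and matches the paper. The gap is in your argument for $\g_G\ge\g_H$: you claim that $\G$ has finite index in $\Z^d$ and that any finite-index subgroup $\La'_j<\G$ ``is already a finite-index subgroup of $\Z^d$.'' This is false in exactly the situation the lemma is designed for: by hypothesis $\G\cong\Z^{d'}$ with $d'<d$, so $\G$ and all of its finite-index subgroups have infinite index in $\Z^d$ (e.g.\ in Lemma \ref{nonincreasing}, $H\cong\Gr_{d-1}$ sits inside a $d$-periodic graph and $\G\cong\Z^{d-1}$). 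You therefore cannot set $\La=\La'_j$ in your identity; $G_{\La'_j}$ is not even a finite graph. The paper sidesteps this direction by treating the growth rates as genuine limits, so that the single identity together with $\<\La\cap\G\>\to\infty$ forces the $G$-side to converge to $\g_H$. If you want to argue purely with limsups, you must show instead that every finite-index $\La'<\G$ with $\<\La'\>$ large arises as $\La\cap\G$ for some finite-index $\La<\Z^d$ with $\<\La\>$ large; for instance take $\La=\La'\oplus NW$, where $W$ is a complement in $\Z^d$ of the saturation $(\G\otimes\Q)\cap\Z^d$ and $N$ is large, and check $\La\cap\G=\La'$ and $\<\La\>\to\infty$. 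That step is missing from your write-up.

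A secondary problem is your ``slicker'' alternative via Mahler measure: the translates $t_iH$ are not $\Z^d$-invariant subgraphs, so Proposition \ref{components} does not apply to them and $\De_{t_iH}$ has no meaning as a $d$-variable Laplacian polynomial; moreover $k=[\Z^d:\G]$ is infinite whenever $d'<d$. The correct relation (using $\o H\cong\o G$) is that $L_G$ is obtained from $L_H$ by the monomial substitution induced by the inclusion $\G\hookrightarrow\Z^d$, which yields $M(\De_G)=M(\De_H)$, not $M(\De_G)=M(\De_H)^{k}$; with the latter formula and Theorem \ref{limit} you would get $\g_G=k\,\g_H$, contradicting the lemma you are trying to prove.
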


\begin{proof}
Let $\La$ be any finite-index subgroup of $\Z^d$.  Then $H$ is invariant under $\La\cap\G$. The image of $H$ in the quotient graph $G_\La$ is isomorphic to  $H_{\La\cap\G}$.  

Note that the quotient $\overline H$ of $H$ by the action of $\G$ is isomorphic to $\overline G$, since the $\Z^d$ orbit of $H$ is all of $G$.  Since $G_\La$ is a $|\Z^d /\La|$-fold cover of $\overline G$ and $H_{\La\cap\G}$ is a $|\G/(\La\cap\G)|$-fold cover of $\overline H$, $G_\La$ comprises $k=|\Z^d /\La| / |\G/(\La\cap\G)|$ mutually disjoint translates of a graph that is isomorphic to $H_{\La\cap\G}$. Hence $\k_{G_\La}=\k_{H_{\La\cap\G}}^k$ and
$$\frac{1}{|\Z^d /\La|} \log \k_{G_\La}= \frac{1} {|\G/(\La\cap\G)|} \log \k_{H_{\La\cap\G}}.$$
Since $\<\La\cap\G\>\to\infty$ as $\<\La\>\to\infty$, we have $\g_G=\g_H$.
\end{proof}

\section{Complexity growth of unsigned periodic graphs} \label{unsigned}

It is natural to ask whether the Mahler measure of Laplacian polynomials of signed graphs differs in appreciable ways from unsigned graphs. Proposition \ref{absolute} answers emphatically yes. 

Throughout the section $G$ denotes an unsigned $d$-periodic graph. For this case, the complexity growth rate $\g_G$ is also the growth rate of the number of spanning trees of finite quotients $G_\La$.   Thus contracting or deleting an edge orbit of $G$ will not increase $\g_G$.

%We call the limit in Theorem \ref{limit} the {\it complexity growth rate} of $G$, and denote it by $\g_G$.  Its relationship to the {\it thermodynamic limit} or {\it bulk limit} defined for a wide class of unsigned lattice graphs is discussed in  \cite{LSW14}. 

Denote by $R= R(\La)$ a fundamental domain of $\La$. Let $G\vert_R$ be the full unsigned subgraph of $G$ on vertices $v_{i, \n},\ \n \in R$. We denote by $\ell_R$ the corresponding medial link. 

 If  $G\vert_R$ is connected for each $R$, then $\{\tau_{G_\La}\}$ and $\{\tau_{G\vert_R}\}$ have the same exponential growth rates. (See Theorem 7.10 of \cite{LSW14} for a short, elementary proof. A more general result is Corollary 3.8 of \cite{Ly05}.) The \emph{bulk limit} is defined by $\g_G/|V(\o G)|$. 

\begin{example} The \emph{$d$-dimensional grid graph} $\Gr_d$ is the unsigned graph with  vertex set $\Z^d$ and single edges 
%from $(n_1, \ldots, n_d)$ to $(n_1', \ldots, n_d')$ if $|n_i -n_i'|=1$ and $n_j = n_j',\ j \ne i$, for every $1 \le i \le d$. 
connecting each pair of vertices of distance 1.  Its Laplacian polynomial is 
\begin{equation*} \De(\Gr_d) = 2d -x_1-x_1^{-1}- \cdots - x_d-x_d^{-1}. \end{equation*}
When $d=2$, it is a plane graph. The graphs links $\ell_R$ are indicated in Figure \ref{gridlinks} for $\La = \langle x_1^2, x_2^2 \rangle$ on left and $\La= \langle x_1^3, x_2^3 \rangle$ on right.

\begin{figure}
\begin{center}
\includegraphics[height=2 in]{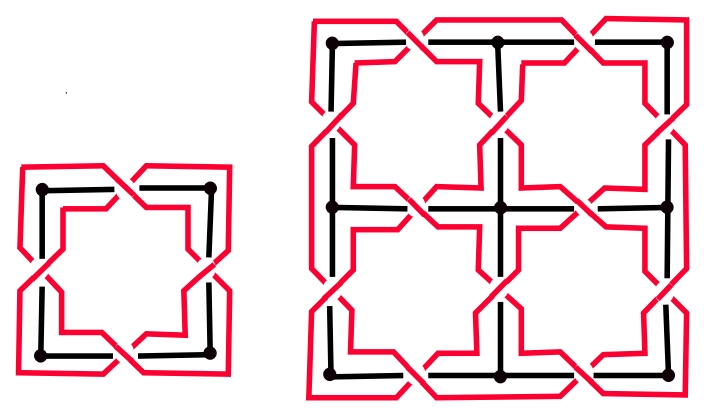}
\caption{Graphs $(\Gr_2)_R$ and associated links, $\La = \langle x_1^2, x_2^2 \rangle$ and $\langle x_1^3, x_2^3 \rangle$}
\label{gridlinks}
\end{center}
\end{figure}

\end{example}

The {\it determinant} of a link $\ell$, denoted here by ${\rm det}(\ell)$, is the absolute value of its 1-variable Alexander polynomial evaluated at $-1$.   
It follows from the Mayberry-Mott theorem \cite{BM54} that if $\ell$ is an alternating link that arises by the medial construction from a finite plane graph, edge signs $\pm 1$ allowed, then ${\rm det}(\ell)$ is equal to the tree complexity of the graph (see Appendix A.4 in \cite{BZ03}).
The following corollary is an immediate consequence of Theorem \ref{limit}.  It has been proven independently by Champanerkar and Kofman \cite{CK16}.

\begin{cor}\label{detgrowth} Let $G$ be a connected $d$-periodic unsigned plane graph, $d =1$ or $2$. Then $$\lim_{\langle \La \rangle \to \infty}  \frac{1}{\vert \Z^d/\La\vert} \log {\rm det}(\ell_R) = 
\g_{\De_G}.$$ \end{cor}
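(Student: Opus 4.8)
The plan is to combine Theorem~\ref{limit} with the Mayberry--Mott identification of link determinant and tree complexity, taking care that the medial link $\ell_R$ is built from the finite plane graph $G|_R$ rather than from the quotient $G_\La$, so that an auxiliary comparison of growth rates is needed. First I would invoke the standard fact (see Appendix~A.4 of \cite{BZ03}, based on \cite{BM54}) that for an alternating link $\ell$ arising by the medial construction from a finite plane graph $H$ with signed edges, $\det(\ell) = \t_H$; here $\ell_R$ is alternating because it comes from a plane graph by the medial construction (in the unsigned case the medial diagram is alternating), so $\det(\ell_R) = \t_{G|_R}$.

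Next I would reduce the statement about $\det(\ell_R)=\t_{G|_R}$ to one about $\k_{G_\La}=\t_{G_\La}$. Since $G$ is unsigned and connected, $G_\La$ is connected, so by the discussion following Definition~\ref{complexity} and by Theorem~\ref{limit}(+Remark~\ref{remarks}(2)) we have $\t_{G_\La}=\k_{G_\La}$ and
\begin{equation*}
\lim_{\langle \La \rangle \to \infty} \frac{1}{|\Z^d/\La|}\log \t_{G_\La} = \log M(\De_G) = \g_{\De_G}.
\end{equation*}
It therefore suffices to show that $\{\t_{G|_R}\}$ and $\{\t_{G_\La}\}$ have the same exponential growth rate, i.e.
\begin{equation*}
\lim_{\langle \La \rangle \to \infty} \frac{1}{|\Z^d/\La|}\log \frac{\t_{G|_R}}{\t_{G_\La}} = 0.
\end{equation*}
This is precisely the content of the comparison result quoted in the text right before the Corollary: when $G|_R$ is connected for each $R$ (which I would note holds for a connected plane grid-type graph once $\langle \La\rangle$ is large, or else cite Corollary~3.8 of \cite{Ly05} which does not need connectivity of $G|_R$), Theorem~7.10 of \cite{LSW14} gives that $\{\t_{G_\La}\}$ and $\{\t_{G|_R}\}$ share the same exponential growth rate. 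Chaining the two limits yields the claim.

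The main obstacle — really the only nontrivial point — is the passage between the ``toroidal'' quotient $G_\La$ and the ``planar'' truncation $G|_R$: spanning-tree counts of $G_\La$ and $G|_R$ differ (the former lives on a torus, the latter on a disk, and they have different numbers of edges), and one must argue the discrepancy is subexponential in $|\Z^d/\La|$. I would handle this by citing the comparison theorem rather than reproving it, but I would take care to state the hypotheses correctly: $d\in\{1,2\}$ so that $G$ (and hence each $G|_R$) is a plane graph making $\ell_R$ genuinely a link diagram and alternating, and the fundamental domains $R=R(\La)$ are chosen to grow in all directions, which is guaranteed by $\langle\La\rangle\to\infty$ (Remark~\ref{remarks}(1)). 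A small additional remark: the choice of checkerboard shading is immaterial since both Tait graphs of a plane graph have the same tree complexity growth rate, but this is not needed if one fixes the shading convention of Section~\ref{diagrams}. With these points in place the corollary is immediate.
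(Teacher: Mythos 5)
Your proposal is correct and follows exactly the route the paper intends: Mayberry--Mott gives $\det(\ell_R)=\t_{G|_R}$ for the alternating medial link of the unsigned plane graph $G|_R$, the quoted comparison result (Theorem 7.10 of \cite{LSW14}, or Corollary 3.8 of \cite{Ly05}) identifies the growth rate of $\t_{G|_R}$ with that of $\t_{G_\La}=\k_{G_\La}$, and Theorem \ref{limit} together with Remark \ref{remarks}(2) (ordinary limit in the unsigned case) gives $\log M(\De_G)=\g_{\De_G}$. The paper treats the corollary as immediate from precisely these ingredients, so your write-up simply makes the same argument explicit, with appropriate care about the connectivity hypothesis on $G|_R$.
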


\begin{remark} \label{rem} (1) We regard the limit in that statement of Corollary \ref{detgrowth} as a \textit{determinant density} of the collection of links $\{\ell_R\}$. There are other ways to define it (e.g., dividing by the number of crossings of the diagram for $\ell_R$).

(2) In \cite{CKP15} the authors consider as well more general sequences of links.  When $G = \Gr_2$, their results imply that:

$$\lim_{\langle \La \rangle \to \infty} \frac{2 \pi}{c(\ell_R)} \log {\rm det}(\ell_R)=  v_{oct},$$
where $c(\ell_R)$ is the number of crossings of $\ell_R$ and $v_{oct} \approx 3.66386$ is the volume of the regular ideal octohedron. 

(3) If Question \ref{question1} has an affirmative answer then Lehmer's question becomes a question about link determinants. 

\end{remark}

Grid graphs are the simplest unsigned $d$-periodic graphs,  as the 
following theorem shows. 

\begin{theorem} \label{min} If $G$ is an unsigned connected $d$-periodic graph, then $\g_G\ge \g_{\Gr_d}$. %Consequently, the $\D$-dimensional grid graph $\Gr_d$ has minimum complexity growth rate among all such graphs.  
\end{theorem}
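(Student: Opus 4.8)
The plan is to show that among all unsigned connected $d$-periodic graphs, the grid graph $\Gr_d$ minimizes the complexity growth rate $\g_G$. The key structural observation, already prepared in the paragraph before the theorem, is that contracting or deleting an edge orbit of an unsigned $d$-periodic graph cannot increase $\g_G$ (since $\g_G$ is the growth rate of the number of spanning trees of the finite quotients $G_\La$, and for unsigned graphs the deletion–contraction formula for the spanning tree count gives $\tau_{G_\La} = \tau_{(G\setminus e)_\La} + \tau_{(G/e)_\La}$, so dropping either term only decreases things). So the strategy is: starting from an arbitrary unsigned connected $d$-periodic $G$, perform a sequence of $\Z^d$-equivariant edge deletions and contractions, never increasing $\g$, until what remains is (a graph with the same growth rate as) $\Gr_d$, or a graph that visibly has growth rate at least $\g_{\Gr_d}$.

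First I would reduce to a convenient normal form. Since $G$ is connected and $d$-periodic, $H_1$ of the quotient torus forces the existence of edge orbits whose cycles realize a full-rank sublattice of $H_1(\T^d;\Z)\cong\Z^d$; concretely, one can find $d$ edge orbits $e_1,\dots,e_d$ whose "connections" $\phi_1,\dots,\phi_d$ (in the sense of Theorem~\ref{poly}) form a basis of $\Z^d$ after a change of basis of $\Z^d$. Contract every vertex orbit down and delete all edge orbits except a spanning set, arranging (after contracting a spanning tree of $\o G$) a quotient with a single vertex and edges realizing the standard basis directions $x_1,\dots,x_d$ — i.e. the quotient of $\Gr_d$ by $\Z^d$. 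One must be slightly careful: deletion/contraction must be done equivariantly (on whole $\Z^d$-orbits of edges) and must keep the graph connected at each stage; connectivity of the periodic graph is what guarantees the existence of the full-rank system of cycles, by Proposition~\ref{zeropoly} and its proof (every component of $\o G$ contains a nontrivial cycle). After these moves we are left with a $d$-periodic graph $G'$ whose quotient has one vertex and $d$ loops winding once in each coordinate direction, which is exactly $\Gr_d$; and $\g_G \ge \g_{G'} = \g_{\Gr_d}$.

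The step I expect to be the main obstacle is making the equivariant deletion–contraction reduction rigorous while preserving connectivity and controlling what the final graph is. Deletion–contraction for the growth rate $\g$ (as opposed to for $\tau$ of a fixed finite graph) needs the inequality $\g_G \ge \max\{\g_{G\setminus e},\, \g_{G/e}\}$ to hold orbit-wise; this follows from $\tau_{G_\La} \ge \tau_{(G/e)_\La}$ and $\tau_{G_\La}\ge \tau_{(G\setminus e)_\La}$ for each $\La$ (each an instance of deletion–contraction on the finite quotient, noting an edge orbit of $G$ descends to an edge orbit of $G_\La$), together with Theorem~\ref{limit}. The delicate point is that $G\setminus e$ or $G/e$ may become disconnected, or may lose the free $\Z^d$-action's cofiniteness in a way that changes the relevant lattice; one handles this by choosing at each step an edge orbit whose removal or contraction keeps $\o G$ connected (always possible as long as $\o G$ is not already the one-vertex graph with $d$ loops) and whose contraction does not merge two distinct vertex orbits in a way that kills a needed homology direction. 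Lemma~\ref{new} is the tool that lets us pass to a lower-dimensional periodic subgraph should the reduction produce a disconnected periodic graph (a union of translates of a lower-periodic piece) without changing $\g$, and the base case $d'=0$ or the direct comparison with $\Gr_{d'}$ then closes the induction.

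Finally, to conclude $\g_{G'} = \g_{\Gr_d}$ one notes $\De_{\Gr_d} = 2d - x_1 - x_1^{-1} - \cdots - x_d - x_d^{-1}$ by Theorem~\ref{poly} (the only cycle-rooted spanning forests of the one-vertex quotient are the individual loops, contributing $2 - x_i - x_i^{-1}$), so $\g_{\Gr_d} = \log M(\De_{\Gr_d})$, and any graph reducible to $\Gr_d$ by non-$\g$-increasing moves has $\g \ge \g_{\Gr_d}$. Conversely $\g_{\Gr_d}$ is attained, so the bound is sharp; that sharpness is automatic and need not be belabored in the proof.
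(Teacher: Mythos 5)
Your overall strategy---monotonicity of $\g$ under equivariant deletion and contraction, contraction of edge orbits down to a single vertex orbit, and then comparison with the grid via Lemma~\ref{new}---is exactly the route the paper takes. However, the pivotal lattice-theoretic step as you state it has a genuine gap. After contracting to one vertex orbit, connectivity only guarantees that the connections $u_1,\dots,u_m$ of the surviving edge orbits generate $\Z^d$; it does \emph{not} guarantee that some $d$ of them form a basis of $\Z^d$, and ``after a change of basis'' does not help, since an automorphism of $\Z^d$ carries a $d$-tuple to a basis only if it already is one. For example, with $d=2$ and connections $(2,0),(0,1),(3,0)$ the periodic graph is connected, yet no two of the three connections form a basis of $\Z^2$. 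So in general you cannot reduce to a quotient with one vertex and $d$ loops in the standard coordinate directions, i.e.\ literally to $\Gr_d$; the best you can do is pick $d$ connections generating a finite-index subgroup $\Gamma\le\Z^d$ and delete the remaining edge orbits.

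Your fallback is then aimed at the wrong case. The subgraph obtained is a disjoint union of $|\Z^d/\Gamma|$ translates of a graph which, together with its $\Gamma\cong\Z^d$-action, is isomorphic to $\Gr_d$: the stabilizer is \emph{full rank}, not lower-dimensional. Lemma~\ref{new} (whose proof works verbatim for a finite-index stabilizer) then gives growth rate exactly $\g_{\Gr_d}$, which is precisely how the paper concludes. Your version, which passes ``to a lower-dimensional periodic subgraph'' and closes by ``direct comparison with $\Gr_{d'}$,'' would only yield $\g_G\ge\g_{\Gr_{d'}}$ for some $d'<d$, and by Lemma~\ref{nonincreasing} that is a weaker bound which does not prove the theorem; fortunately, connectivity ensures the lower-dimensional case never actually arises here. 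Finally, your worries about preserving connectivity, freeness and cofiniteness during the reduction are unnecessary: one contracts only edge orbits joining two distinct vertex orbits (which preserves all three properties), and the disconnection caused by the final deletions is exactly what Lemma~\ref{new} absorbs.
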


Asymptotic results about the Mahler measure of certain families of polynomials have been obtained elsewhere. However, the graph theoretic methods that we employ to prove Theorem \ref{min} are different from techniques used previously. %[ELABORATE?]

\begin{proof}
Consider the case in which $G$ has a single vertex orbit. Then for some $u_1,\ldots, u_m\in \Z^d$, with $m \ge d$, the edge set $E(G)$ consists of edges from $v$  to $u_i \cdot v$ for each $v\in V$ and $i=1,\ldots,m$.  Since $G$ is connected, we can assume after relabeling that $u_1, \ldots, u_d$ generate a finite-index subgroup of $\Z^d$. Let $G'$ be the $\Z^d$-invariant subgraph of $G$ with edges from $v$  to $u_i \cdot v$ for each $v\in V$ and $i=1,\ldots,d$.  Then $G'$ is the orbit of a subgraph of $G$ that is isomorphic to $\Gr_d$, and so by Lemma \ref{new}, $\g(\Gr_d)=\g(G')\le\g(G)$.

We now consider a connected graph $G$ having vertex families $v_{1, \n}, \ldots, v_{n, \n}$, where $n >1$. Since $G$ is connected, there exists an edge $e$ joining $v_{1, {\bf 0}}$ to some $v_{i, \n}$. Contract the edge orbit $\Z^d \cdot e$ to obtain a new graph $G'$ having cofinite free $\Z^d$-symmetry and complexity growth rate no greater than that of $G$.
Repeat the procedure with the remaining vertex families so that only $v_{1, \n}$ remains. The proof in the previous case of a graph with a single vertex orbit now applies. 
\end{proof}

\begin{remark} The conclusion of Theorem \ref{min} does not hold without the hypothesis that $G$ is connected. Consider the 2-periodic graph $G$ obtained from $\Gr_2$ by removing all vertical edges, so that $G$ consists of 
countably many copies of $\Gr_1$ . Then $\g_G =\g_{\Gr_1}=0$ while $\g_{\Gr_2} > 0$. \end{remark}

The following lemma, needed for the proof Proposition \ref{absolute}, is of independent interest. 

\begin{lemma} \label{nonincreasing} The sequence of complexity growth rates $\g_{\De_{\Gr_d}}$ is nondecreasing. 
\end{lemma}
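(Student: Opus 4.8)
The plan is to show directly that $\g_{\De_{\Gr_{d+1}}} \ge \g_{\De_{\Gr_d}}$ for every $d \ge 1$. The natural idea is to exhibit $\Gr_{d+1}$ (or a graph with the same complexity growth rate) as containing an orbit of a copy of $\Gr_d$ under a suitable corank-$1$ sublattice, and then invoke Lemma \ref{new}. Concretely, inside $\Gr_{d+1}$, fix the last coordinate and delete all edge orbits in the $x_{d+1}$-direction; the resulting graph $G'$ is a $\Z^{d+1}$-periodic graph each of whose components is a translate of $\Gr_d$ (sitting in a hyperplane $\Z^d \times \{k\}$), and the $\Z^{d+1}$-orbit of any one such component is all of $G'$. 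Its stabilizer is $\Gamma = \Z^d \times \{0\} \cong \Z^d$, acting on the component as the standard cofinite free action of $\Z^d$ on $\Gr_d$. By Lemma \ref{new}, $\g_{G'} = \g_{\Gr_d}$.

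The remaining point is that deleting edge orbits does not increase the complexity growth rate of an unsigned graph. This is exactly the observation recorded at the start of Section \ref{unsigned}: for unsigned graphs $\g_G$ is the growth rate of the number of spanning trees of the finite quotients, and deleting (or contracting) an edge orbit cannot increase the spanning-tree count of any quotient, hence cannot increase $\g$. Therefore $\g_{\Gr_d} = \g_{G'} \le \g_{\Gr_{d+1}}$, which is precisely the assertion that the sequence $\{\g_{\De_{\Gr_d}}\}$ is nondecreasing.

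The one place requiring a little care — and the main obstacle — is verifying the hypotheses of Lemma \ref{new} hold honestly after the deletion: namely that $G'$ really is (up to isomorphism) the $\Z^{d+1}$-orbit of a single $\Gr_d$-component, that this component's stabilizer is a rank-$d$ sublattice acting cofinitely and freely, and that the quotient of the component by $\Gamma$ is isomorphic to $\overline{\Gr_d}$. All of this is routine from the product structure of the grid, but it should be spelled out so that the appeal to Lemma \ref{new} is clean. (Alternatively, one can avoid Lemma \ref{new} entirely and argue at the level of finite quotients: for $\La = N\cdot\Z^{d+1}$ one checks that $(\Gr_{d+1})_\La$ contains $(\Gr_d)_{N\cdot\Z^d}$ as a spanning subgraph on an $N$-fold-smaller vertex set repeated $N$ times, and compares spanning-tree counts directly; this is essentially the proof of Theorem \ref{min} specialized to the grid.)
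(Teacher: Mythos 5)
Your argument is correct and is essentially the paper's own proof: the paper likewise deletes the edge orbits parallel to the last coordinate axis, observes the result is a disjoint union of translates of the lower-dimensional grid, and applies Lemma \ref{new} together with the monotonicity of $\g$ under edge-orbit deletion noted at the start of Section \ref{unsigned}. No further changes are needed.
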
 

\begin{proof}  Consider the grid graph $\Gr_d$. Deleting all edges 
in parallel to the $d$th coordinate axis yields a subgraph $G$ consisting of countably many mutually disjoint translates of $\Gr_{d-1}$. By Lemma \ref{new},   $\g_{\Gr_{d-1}}=\g_G  \le \g_{\Gr_d}$. 
\end{proof}

Doubling each edge of $\Gr_1$ results in a graph with Laplacian polynomial $2(x-2+x^{-1})$, which has Mahler measure $2 M (x-2+x^{-1}) = 2$. We show that this graph realizes the minimum nonzero complexity growth rate. 

\begin{prop} \label{absolute} (Complexity Growth Rate Gap) Let $G$ be any unsigned $d$-periodic graph. If $\g_G \ne 0$, then 
$$\g_G \ge \log 2.$$            
\end{prop}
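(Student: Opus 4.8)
The plan is to reduce the general case to the one-variable case via Lemma \ref{new}, and then to establish a sharp lower bound for the Mahler measure of one-variable Laplacian polynomials of $1$-periodic graphs. First I would observe that if $\g_G \ne 0$ then $\De_G$ is not identically zero, so by Proposition \ref{zeropoly} every component of $G$ is infinite; in particular $G$ has at least one component, and taking $H$ to be one component orbit, Lemma \ref{new} gives $\g_G = \g_H$ where $H$ is $d'$-periodic for some $1 \le d' \le d$ and is connected on each orbit. (If $\g_H = 0$ we may pass to a component with nonzero growth rate; at least one exists since $\g_G \ne 0$ and $\g_G$ is a sum — via Proposition \ref{components} and multiplicativity of Mahler measure — of the growth rates of the component orbits.) So without loss of generality $G$ is connected and $d$-periodic with $\De_G \not\equiv 0$.

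Next I would slice down to $d = 1$. The idea is to intersect $G$ with a suitable rank-one sublattice direction: choose a primitive vector $w \in \Z^d$ and let $\Z w$ act; the full subgraph $H$ on a set of vertices fixed by the complementary directions is a $1$-periodic graph. But one must arrange that $H$ is connected and that restricting does not increase the growth rate — this is exactly the kind of contract/delete argument used in the proof of Theorem \ref{min}. Concretely, since $G$ is unsigned and connected, by contracting edge orbits (which does not increase $\g_G$, as noted at the start of Section \ref{unsigned}) I can reduce to a single vertex orbit, and then deleting all but one edge orbit leaves a $1$-periodic subgraph $G'$ which is the orbit of a connected $1$-periodic graph $H$; then $\g_G \ge \g_{G'} = \g_H$ by Lemma \ref{new}, and $\g_H \ne 0$ provided we keep an edge orbit that winds nontrivially around the annulus — such an orbit exists, for otherwise $\De_G$ would have all cycles of trivial connection and vanish identically by Theorem \ref{poly}.

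Now the heart of the matter: for a connected $1$-periodic unsigned graph $H$ with a single vertex orbit and with $\De_H \not\equiv 0$, show $M(\De_H) \ge 2$. By Theorem \ref{poly}, $\De_H = \sum_{\o F} \prod_{\rm Cycles} (2 - \phi - \phi^{-1})$ summed over cycle-rooted spanning forests of $\o H$, and since $\o H$ has a single vertex, each $\o F$ is a single cycle at the vertex, i.e. a loop, so $\De_H(x) = \sum_{j} c_j (2 - x^j - x^{-j})$ where $c_j \ge 0$ counts loops of winding number $\pm j$ and the sum ranges over $j \ge 1$ with $\De_H \not\equiv 0$ forcing some $c_j > 0$. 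Writing $x = e^{2\pi i \theta}$, we have $\De_H(e^{2\pi i \theta}) = \sum_j c_j \cdot 2(1 - \cos 2\pi j\theta) = \sum_j 4 c_j \sin^2(\pi j \theta) \ge 0$. The Mahler measure is $\exp \int_0^1 \log \De_H(e^{2\pi i\theta})\, d\theta$. I would factor out: $\De_H(x) = (2 - x - x^{-1}) \cdot g(x)$ where $g(x) = \sum_j c_j (x^{j-1} + x^{j-2} + \cdots + x^{-(j-1)})$ has nonnegative coefficients, is palindromic, and satisfies $g(1) = \sum_j c_j j^2 \ge 1$ — hence $g(1)$ is a positive integer, so $g$ is a nonzero integer polynomial. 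Then $M(\De_H) = M(2 - x - x^{-1}) \cdot M(g) = 1 \cdot M(g)$, which is unfortunately only $\ge 1$, not $\ge 2$ — so this naive factorization loses too much. The fix I expect to need: do \emph{not} split off the cyclotomic factor; instead bound $\int_0^1 \log \De_H$ directly. Note $\De_H(x) \ge c_{j_0}(2 - x^{j_0} - x^{-j_0})$ pointwise on the circle for any single $j_0$ with $c_{j_0} \ge 1$, but the right side has Mahler measure $c_{j_0} M(2 - x^{j_0} - x^{-j_0}) = c_{j_0} \ge 1$ again. The genuine gain must come from combining the constant term with the structure. The constant term of $\De_H$ is $2\sum_j c_j$, and its leading coefficient is $-c_{j_{\max}}$; by the Jensen-formula description (Remark after Definition \ref{mahler}), $M(\De_H) \ge |{\rm leading\ coeff}|$ and also $M(\De_H) \ge |{\rm constant\ term}| / (\text{product of roots' structure})$ — more useful is that $M(\De_H)$ equals $|c_{j_{\max}}|$ times the product of root moduli outside the unit circle. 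I expect the cleanest route is: show $\De_H$ has a root off the unit circle unless $\De_H = 2 - x^j - x^{-j}$ up to the obvious scaling, handle that exceptional family by hand (there $M = 1$ but then $H$ has a finite component after quotienting? no — there $\g_H = 0$ only if... in fact $M(2 - x^j - x^{-j}) = 1$, giving $\g_H = 0$, contradicting our reduction), and in all remaining cases use that either the leading coefficient is $\ge 2$ or $\De_H$ is a nonconstant-Mahler-measure polynomial that is \emph{not} a product of cyclotomics times a unit, forcing $M \ge$ (Lehmer-type bound) — but that only gives $\ge 1.17$, not $\ge 2$.

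The resolution, which I believe is the intended one, is to track \textbf{both} the leading coefficient and the value $\De_H(-1)$. We have $\De_H(-1) = \sum_j c_j(2 - (-1)^j - (-1)^{-j}) = \sum_{j\ {\rm odd}} 4 c_j$, an integer multiple of $4$ (possibly zero). If some odd $j$ has $c_j \ge 1$ then $|\De_H(-1)| \ge 4$, and since $M(f) \ge |f(-1)|^{?}$ fails in general, instead use: $\log M(\De_H) = \int_0^1 \log\De_H(e^{2\pi i\theta})d\theta \ge \frac12\big(\log\De_H(1^-) + \cdots\big)$ — this is where I'd invoke concavity of $\log$ together with the explicit Fejér-kernel positivity. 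Honestly, the main obstacle is precisely this final numerical step: proving the constant $2$ (not merely $1$ or $1.17$) is the right gap. I anticipate the argument is: factor $\De_H(x) = (2-x-x^{-1}) h(x)$ as above with $h$ a nonnegative-coefficient palindromic integer polynomial, $h \not\equiv 0$; then show $h$ is \emph{either} a nonzero constant $\ge 2$ \emph{or} has a root outside the unit disk of modulus at least $2$ — the latter because $h(x) = \sum_j c_j \frac{x^j - x^{-j}}{x - x^{-1}}\cdot(\text{sign})$ rearranges to a polynomial whose reciprocal... at this point I would examine small cases to pin down the mechanism, but the structural claim I'd aim to prove is:

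\begin{lemma}
Let $h(x) = \sum_{j=1}^{N} c_j\,(x^{j-1} + x^{j-3} + \cdots + x^{-(j-1)})$ with each $c_j \in \Z_{\ge 0}$ and $h \not\equiv 0$. Then either $h$ is a constant with $|h| \ge 2$, or $M(h) \ge 2$.
\end{lemma}

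\noindent With such a lemma in hand, $\g_G = \g_H = \log M(\De_H) = \log M(2 - x - x^{-1}) + \log M(h) = \log M(h) \ge \log 2$, completing the proof. The main obstacle, to reiterate, is proving this lemma — establishing that the specific arithmetic/combinatorial shape of $h$ (coming from loop winding numbers) forces Mahler measure at least $2$ once $h$ is nonconstant, which I expect requires either a clever evaluation (perhaps at $x = 2$, using that $h(2)$ is large while $h(x)/x^{\deg h}$ is controlled) or an appeal to a result on the location of roots of polynomials of this Fejér-type form. I would attempt the evaluation-at-$x=2$ route first: if $h$ has degree $\delta \ge 1$ and positive integer leading coefficient $c_N$, then $M(h) = c_N \prod_{|\lambda_i| > 1}|\lambda_i|$; bounding this below by $2$ amounts to showing the roots cannot all cluster near the unit circle, which the nonnegativity and palindromic structure of the coefficients of $h$ should prevent.
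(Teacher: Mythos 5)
Your reduction from general $d$ to $d=1$ does not work as written. After contracting to a single vertex orbit, if you delete all but one edge orbit, the subgraph $G'$ you keep is a disjoint union of translates of a line: by Lemma \ref{new} its growth rate is that of (a rewound copy of) $\Gr_1$, whose Laplacian polynomial $2-x^j-x^{-j}$ has Mahler measure $1$, so $\g_{G'}=0$ and the inequality $\g_G\ge\g_{G'}$ is vacuous. Your safeguard ``keep an edge orbit that winds nontrivially'' only guarantees $\De\not\equiv 0$, not $\g\ne 0$ --- you are conflating the two. Moreover no slicing of this kind can succeed in general: already for $G=\Gr_2$ every $1$-periodic subgraph obtained by deleting edge orbits has zero growth rate, since that would require two edge orbits in a common rank-one direction. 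The paper disposes of $d\ge 2$ by an entirely different mechanism: Theorem \ref{min} gives $\g_G\ge\g_{\Gr_d}$ for connected unsigned $G$, Lemma \ref{nonincreasing} gives $\g_{\Gr_d}\ge\g_{\Gr_2}\approx 1.165>\log 2$, and only the case $d=1$ needs a bespoke argument.

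For that $d=1$ case you have not closed the argument, as you acknowledge; and the lemma you propose is false as stated (take $c_2=1$ and all other $c_j=0$: then $h$ is nonconstant with $M(h)=1$), besides which your formula for $\De_H/(2-x-x^{-1})$ is wrong --- the quotient of $2-x^j-x^{-j}$ by $2-x-x^{-1}$ is the square $x^{-(j-1)}(1+x+\cdots+x^{j-1})^2$, consistent with your own evaluation $h(1)=\sum_j c_j j^2$, not the unsquared sum you wrote. The paper's endgame avoids any such general lemma: after contracting to one vertex orbit, either an orbit of parallel edges appears (giving a factor $2$ at once), or one \emph{deletes edge orbits down to exactly two}, so $\De_{G'}=4-x^r-x^{-r}-x^s-x^{-s}$ with $r,s\ge 1$; on the unit circle this is $4\sin^2(\pi r\theta)+4\sin^2(\pi s\theta)\ge 8\,|\sin(\pi r\theta)|\,|\sin(\pi s\theta)|$, and since $\int_0^1\log|2\sin(\pi k\theta)|\,d\theta=0$ for every $k\ge 1$, Jensen gives $\log M(\De_{G'})\ge \log 8-\log 2-\log 2=\log 2$. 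Because deleting edge orbits never increases the spanning-tree growth rate and this two-orbit bound holds unconditionally in $r,s$, there is no need to worry whether the pruned graph retains positive growth. If you want to salvage your approach, prove your lemma with the single-term cases excluded --- but the two-orbit pruning plus the AM--GM estimate above is the missing quantitative idea.
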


Although $\De_{\Gr_d}$ is relatively simple, the task of computing its Mahler measure is not. It is well known and not difficult to see that $\g_{\Gr_d} \le \log 2d$. We will use a theorem of N. Alon \cite{Al90} to show that $\g_{\Gr_d}$ approaches $\log 2d$ asymptotically. 

\begin{theorem} \label{gridlim} (1) $\g_{\Gr_d} \le \log 2d$, for all $d \ge 1$.\\ 
(2) $\lim_{d \to \infty} \g_{\Gr_d}- \log 2d = 0.$ \end{theorem}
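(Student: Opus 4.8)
The plan is to prove the two parts separately, using the combinatorial formula for $\g_{\Gr_d}$ together with Alon's theorem on the asymptotic number of spanning trees of regular graphs. For part (1), I would first recall that, since $\Gr_d$ is unsigned and connected, $\g_{\Gr_d} = \log M(\De_{\Gr_d})$ where $\De_{\Gr_d} = 2d - \sum_{j=1}^d (x_j + x_j^{-1})$. Then the bound $\g_{\Gr_d} \le \log 2d$ follows from Jensen's inequality (concavity of $\log$) applied to the integral defining $M$: since $|\De_{\Gr_d}(e^{2\pi i\theta_1},\dots,e^{2\pi i\theta_d})| = |2d - 2\sum_j \cos 2\pi\theta_j| \le 4d$ pointwise is too crude, I would instead note $M(\De_{\Gr_d}) \le \exp\bigl(\log \int \cdots \int |\De_{\Gr_d}|\bigr)$... actually the clean route is: $\log M(f) = \int \log|f| \le \log \int |f|$ by Jensen, and $\int_0^1\cdots\int_0^1 \bigl(2d - \sum_j 2\cos 2\pi\theta_j\bigr)\,d\theta = 2d$ since each $\int_0^1 \cos 2\pi\theta_j\,d\theta_j = 0$ and the integrand is nonnegative. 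That gives $M(\De_{\Gr_d}) \le 2d$, hence part (1). (Alternatively this is just the observation that the number of spanning trees of a $k$-regular graph on $N$ vertices is at most $k^N$ up to lower-order factors, which is where the bound ``well known'' comes from.)

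For part (2), the idea is that $\Gr_d$ quotients to $2d$-regular graphs (finite tori $(\Z/r)^d$ are $2d$-regular once $r \ge 3$, accounting for no multi-edges), and Alon's theorem states that for a sequence of $k$-regular graphs $G_n$ on $N_n$ vertices with girth tending to infinity, $\tfrac{1}{N_n}\log(\text{number of spanning trees of } G_n) \to \log\bigl(\tfrac{(k-1)^{k-1}}{(k^2-2k)^{k/2-1}}\bigr)$ — i.e., it converges to the value $\log h_k$ where $h_k = \tfrac{(k-1)^{k-1}}{(k(k-2))^{k/2-1}}$, which is the ``tree entropy'' of the $k$-regular tree. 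The key steps would be: (i) recognize that as $\<\La\> \to \infty$, the quotient graphs $(\Gr_d)_\La$ are $2d$-regular with girth going to infinity, so by Theorem \ref{limit} combined with Alon's theorem, $\g_{\Gr_d}$ equals $\log h_{2d}$ where $h_{2d} = \tfrac{(2d-1)^{2d-1}}{(2d(2d-2))^{d-1}}$; (ii) compute the asymptotics of $\log h_{2d} - \log 2d$ as $d \to \infty$. For step (ii), write $h_{2d} = (2d-1)\cdot\tfrac{(2d-1)^{2d-2}}{(4d^2-4d)^{d-1}} = (2d-1)\cdot\bigl(\tfrac{(2d-1)^2}{4d^2-4d}\bigr)^{d-1} = (2d-1)\bigl(\tfrac{4d^2-4d+1}{4d^2-4d}\bigr)^{d-1} = (2d-1)\bigl(1 + \tfrac{1}{4d^2-4d}\bigr)^{d-1}$. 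As $d\to\infty$ the second factor tends to $1$ (since $(d-1)/(4d^2-4d) \to 0$), so $h_{2d} = (2d-1)(1+o(1))$, hence $\log h_{2d} - \log 2d = \log\tfrac{2d-1}{2d} + o(1) \to 0$.

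The main obstacle, and the step requiring the most care, is verifying the hypotheses of Alon's theorem for the sequence $\{(\Gr_d)_\La\}$ as $\<\La\> \to \infty$ — specifically that the girth of these quotient graphs tends to infinity (true because a short cycle in $(\Gr_d)_\La$ lifts to a path in $\Gr_d$ between two $\La$-translates of a vertex, which forces a short nonzero vector in $\La$) and that they have no loops or multiple edges for $\<\La\>$ large (so that ``$2d$-regular'' is literally correct and Alon's count applies). I also need to confirm that Theorem \ref{limit}'s limit (which exists here since $G$ is unsigned, and along the cofinal family of $\La$ with $\<\La\>$ large the entropy is realized) agrees with the spanning-tree growth rate — this is exactly Remark \ref{remarks}(2) — and then that Alon's limiting constant is $\log h_{2d}$ as stated. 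A secondary, purely bookkeeping obstacle is keeping the algebra in the $h_{2d}$ asymptotics correct; but as shown above it reduces to the elementary limit $\bigl(1 + \tfrac{1}{4d^2-4d}\bigr)^{d-1} \to 1$, so no real difficulty remains once Alon's theorem is correctly invoked.
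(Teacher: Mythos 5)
Your part (1) is fine: bounding $\log M(\De_{\Gr_d})=\int\log|\De_{\Gr_d}|$ by $\log\int|\De_{\Gr_d}|$ via Jensen, and noting that the integrand $2d-2\sum_j\cos 2\pi\theta_j$ is nonnegative with integral $2d$, gives $\g_{\Gr_d}\le\log 2d$; this is a cleaner route than the paper's, which expands $\log\bigl(1-\tfrac1d\sum_j\cos 2\pi\theta_j\bigr)$ in a power series and observes that the surviving (even) terms contribute negatively.

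Part (2), however, has a genuine gap. The version of Alon's theorem you invoke -- convergence of $\tfrac1N\log\t$ to the tree entropy $\log h_k$ of the $k$-regular tree for $k$-regular graphs whose girth tends to infinity -- does not apply here, because the girth of $(\Gr_d)_\La$ does \emph{not} tend to infinity as $\<\La\>\to\infty$: for $d\ge 2$ the grid $\Gr_d$ contains $4$-cycles (unit squares), and these project to $4$-cycles in every quotient, so the girth is stuck at $4$. Your lifting argument only controls cycles that are essential in the torus; null-homotopic cycles lift to closed cycles in $\Gr_d$ and give no short vector in $\La$. The local (Benjamini--Schramm) limit of the quotients is $\Z^d$, not the $2d$-regular tree, and indeed the identity you would conclude, $\g_{\Gr_d}=\log h_{2d}$, is false: for $d=2$ one has $\g_{\Gr_2}\approx 1.166$ while $\log h_4=\log(27/8)\approx 1.216$. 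The statement you actually need is the one the paper cites as Theorem \ref{lowerbound} (Alon, 1990): for \emph{every} finite connected $\rho$-regular graph, $\t_G\ge[\rho(1-\epsilon(\rho))]^{|V(G)|}$ with $\epsilon(\rho)\to 0$ as $\rho\to\infty$, with no girth hypothesis and an error depending only on the degree. Applying this to the $2d$-regular quotients $(\Gr_d)_\La$ gives $\tfrac{1}{|\Z^d/\La|}\log\t_{(\Gr_d)_\La}\ge\log 2d+\log(1-\epsilon(2d))$, and then Theorem \ref{limit} (together with your part (1) upper bound) yields $\g_{\Gr_d}-\log 2d\to 0$; your asymptotic computation with $h_{2d}$ is then unnecessary.
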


%\noindent {\it Proof of Theorem \ref{min}.}  
%Consider the case in which $G$ has a single vertex orbit. Then for some $u_1,\ldots, u_m\in \Z^d$, with $m \ge d$, the edge set $E(G)$ consists of edges from $v$  to $u_i \cdot v$ for each $v\in V$ and $i=1,\ldots,m$.  Since $G$ is connected, we can assume after relabeling that $u_1, \ldots, u_d$ generate a finite-index subgroup of $\Z^d$. Let $G'$ be the be the $\Z^d$-invariant subgraph of $G$ with edges from $v$  to $u_i \cdot v$ for each $v\in V$ and $i=1,\ldots,d$.  Then $G'$ is the orbit of a subgraph of $G$ that is isomorphic to $\Gr_d$, and so by Lemma \ref{new}, $\g(\Gr_d)=\g(G')\le\g(G)$.
%
%We now consider a connected graph $G$ having vertex families $v_{1, \n}, \ldots, v_{n, \n}$, where $n >1$. Since $G$ is connected, there exists an edge $e$ joining $v_{1, {\bf 0}}$ to some $v_{2, \n}$. Contract the edge orbit $\Z^d \cdot e$ to obtain a new graph $G'$ having cofinite free $\Z^d$-symmetry and complexity growth rate no greater than that of $G$.
%Repeat the procedure with the remaining vertex families so that only $v_{1, \n}$ remains. The proof in the previous case of a graph with a single vertex orbit now applies. \qed \\

%%%%%%%%%%%%%%%%

\noindent {\it Proof of Proposition \ref{absolute}.}  By Lemma \ref{new} it suffices to consider a connected $d$-periodic graph $G$ with $\g_G$ nonzero. Note that $\g_{\Gr_1} =0$ while $\g_{\Gr_2} \approx 1.165$ is greater than $\log 2$.  By Theorem \ref{min} and Lemma \ref{nonincreasing} we can  assume that $d=1$. 

If $G$ has an orbit of parallel edges, we see easily that $\g_G \ge \log 2$.  Otherwise, we proceed as in the proof of  Theorem \ref{min}, contracting edge orbits to reduce the number of vertex orbits  without increasing the complexity growth rate.  If at any step we obtain an orbit of parallel edges, we are done; otherwise we will obtain a graph $G'$ with a single vertex orbit and no loops. 
If $G'$ is isomorphic to $\Gr_1$ then $G$ must be a tree; but then $\g_G=0$, contrary to our hypothesis. So $G'$ must have at least two edge orbits.  
Deleting excess edges, we may suppose $G'$ has exactly two edge orbits. 

The Laplacian polynomial $\De_{G'}$ has the form $4-x^r-x^{-r}- x^s-x^{-s}$, for some positive integers $r, s$. Reordering the vertex set of $G'$, we can assume without loss of generality that $r=1$. The following calculation is based on an idea suggested to us by Matilde Lalin.

$$\log M(\De_{G'}) = \int_0^1 \log \vert 4- 2 \cos( 2 \pi \theta)-2 \cos(2 \pi s \theta) \vert \ d\theta$$

$$=\int_0^1 \log \vert  2(1-\cos  (2 \pi \theta)) + 2(1 - \cos(2 \pi s \theta)) \vert \ d\theta$$

$$=\int_0^1 \log  \bigg( 4\sin^2(\pi \theta) + 4\sin^2(\pi s \theta) \bigg) \ d\theta.$$

\ni Using the inequality $(u^2+v^2) \ge 2 u v$, for any nonnegative $u, v$, we have:

$$\log M(\De_{G'}) \ge \int_0^1 \log \bigg( 8 \vert \sin( \pi \theta)\vert\  \vert \sin( \pi s \theta) \vert \bigg)\ d\theta$$

$$= \log 8 + \int_0^1 \log \vert \sin(  \pi \theta) \vert \ d\theta + \int_0^1 \log \vert \sin( \pi s \theta) \vert \ d\theta$$

$$= \log 8 + \int_0^1 \log \sqrt{\frac{1-\cos(2 \pi \theta)}{2}}\ d \theta + \int_0^1 \log \sqrt{\frac{1-\cos(2  \pi s\theta)}{2}}\ d \theta$$

$$=\log 8 + \int_0^1 \frac{1}{2} \log \bigg(\frac{2 - 2 \cos(2 \pi \theta)}{4}\bigg)\ d \theta +   \int_0^1 \frac{1}{2} \log \bigg(\frac{2 - 2 \cos(2 \pi s \theta)}{4}\bigg)\ d \theta$$

$$= \log 8 + \frac{1}{2}m(2 - x -x^{-1}) -\frac{1}{2} \log 4 + \frac{1}{2}m(2 - x^s - x^{-s})- \frac{1}{2} \log 4$$

$$= 3\log 2 + 0 - \log 2 +0 -\log 2 = \log 2.$$
\qed\\

Our proof of Theorem \ref{gridlim} depends on the following result of Alon. 

\begin{theorem}\label{lowerbound}\cite{Al90}   If $G$ is a finite connected $\rho$-regular unsigned graph, then 
%$$\t(G) \ge \rho^{|V|}2^{-O(|V|(\log \log \rho)^2/\log\rho)}.$$ 
$$\t_G \ge [\rho(1-\epsilon(\rho))]^{|V(G)|},$$ 
where $\epsilon(\rho)$ is a nonnegative function with $\epsilon(\rho)\to 0$ as $\rho\to\infty$.
\end{theorem}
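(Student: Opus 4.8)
The plan is to reduce the statement to a spectral estimate via the Matrix--Tree theorem and then to bound the resulting product of Laplacian eigenvalues from below using concentration of the adjacency spectrum. Write $\rho = \lambda_1 > \lambda_2 \ge \cdots \ge \lambda_n \ge -\rho$ for the eigenvalues of the adjacency matrix $A$ of the connected $\rho$-regular graph $G$ on $n = |V(G)|$ vertices; since $G$ is unsigned and $\rho$-regular the Laplacian is $L_G = \rho I - A$, with eigenvalues $\mu_i = \rho - \lambda_i$, and exactly one of them ($\mu_1$) is zero. The Matrix--Tree theorem (invoked as in Section~\ref{Coloring}) gives
\begin{equation*}
\t_G = \frac{1}{n}\prod_{i=2}^n \mu_i = \frac{1}{n}\prod_{i=2}^n(\rho - \lambda_i),
\end{equation*}
so it suffices to prove $\frac{1}{n}\sum_{i=2}^n \log(\rho - \lambda_i) \ge \log\rho - \delta(\rho)$ for some $\delta(\rho)\to 0$. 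Exponentiating and absorbing into $\delta$ the harmless boundary losses $\frac{\log n}{n}$ and $\frac{\log \rho}{n}$ (both bounded by $O(\tfrac{\log\rho}{\rho})$ because a simple $\rho$-regular graph has $n \ge \rho+1$) then yields the claim with $1 - \e(\rho) = e^{-\delta(\rho)}$.

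Next I would extract the needed concentration from low-order traces. From $\mathrm{tr}(A)=0$ and $\mathrm{tr}(A^2)=2|E(G)|=n\rho$ one reads off $\sum_{i\ge 2}\lambda_i = -\rho$ and $\sum_{i\ge 2}\lambda_i^2 = n\rho - \rho^2$. Setting $x_i = \lambda_i/\rho \in [-1,1)$, the normalized spectrum $x_2,\ldots,x_n$ thus has mean $-\tfrac{1}{n-1}$ and second moment $\tfrac{n-\rho}{(n-1)\rho}\le \tfrac1\rho$, i.e.\ it is concentrated near $0$ with standard deviation $O(\rho^{-1/2})$. Writing $\frac1n\sum_{i\ge 2}\log(\rho-\lambda_i) = \frac{n-1}{n}\log\rho + \frac1n\sum_{i\ge 2}\log(1-x_i)$, I reduce matters to bounding $\frac1n\sum_{i\ge 2}\log(1-x_i)$ below. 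On the bulk of the spectrum, where $x_i$ is bounded away from $1$, the elementary inequality $\log(1-x)\ge -x-x^2$ together with the two moment identities controls the bulk sum by $O(1/\rho)$, which is exactly of the required form.

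The hard part is the contribution of eigenvalues $\lambda_i$ close to $\rho$ (equivalently, small Laplacian eigenvalues $\mu_i$), which occur when $G$ is a poor expander; each such term equals $\log(\mu_i/\rho)$ and is large and negative, while the second moment alone permits an $O(1/\rho)$ fraction of the spectrum to sit near $\rho$. To handle these I would combine two ingredients: a deterministic polynomial lower bound on the spectral gap of a connected graph, $\mu_i \ge \mu_2 \gtrsim \bigl(n\,\mathrm{diam}(G)\bigr)^{-1}$, so that no individual small term is worse than $-O(\log n)$; and an upper bound on the number $N(\delta)=\#\{i:\mu_i<\delta\rho\}$ of near-extremal eigenvalues obtained from the higher even traces $\mathrm{tr}(A^{2k})=\sum_i\lambda_i^{2k}$, which count closed $2k$-walks in $G$. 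The main technical obstacle — and the place where the real work lies — is making this walk-counting bound quantitative and, crucially, \emph{uniform in $n$ and independent of the girth of $G$}: short cycles inflate $\mathrm{tr}(A^{2k})$ above the $\rho$-regular-tree (Catalan) value, so the clean large-girth moment estimates do not apply directly, and one must show nonetheless that $N(\delta)=o\bigl(n/\log n\bigr)$ as $\delta\to 0$ so that the total edge contribution $N(\delta)\cdot O(\log n)/n$ tends to $0$. Once this uniform edge bound is in hand, the bulk estimate and the edge estimate combine to give $\delta(\rho)\to 0$, completing the proof.
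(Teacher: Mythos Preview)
The paper does not prove this statement; Theorem~\ref{lowerbound} is quoted from Alon~\cite{Al90} without proof and then invoked as a black box in the proof of Theorem~\ref{gridlim}(2). There is therefore nothing in the paper to compare your sketch against.

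Assessed on its own, your proposal is not a proof. The Matrix--Tree reduction and the bulk estimate via the first two adjacency moments are fine, but you explicitly leave the edge estimate unfinished, and the mechanism you outline for it does not close. The second-moment identity $\sum_{i\ge 2} x_i^2 = n/\rho - 1$ gives only $N(\delta)\le n/\bigl(\rho(1-\delta)^2\bigr)=O(n/\rho)$ eigenvalues with $x_i>1-\delta$, while the gap bound $\mu_2\gtrsim (n\,\mathrm{diam}\,G)^{-1}$ permits each such term to contribute $-O(\log n)$; the resulting edge error is of order $(\log n)/\rho$, which is not $o(1)$ uniformly in $n$ for fixed $\rho$, so the desired conclusion $\epsilon(\rho)\to 0$ does not follow. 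Your appeal to higher even traces does not rescue this: bounding $N(\delta)$ from above via $\mathrm{tr}(A^{2k})$ requires an \emph{upper} bound on $\mathrm{tr}(A^{2k})$, whereas the only general inequality available (comparison with the $\rho$-regular tree through the universal cover) bounds $\mathrm{tr}(A^{2k})$ \emph{from below}, and the trivial upper bound $\mathrm{tr}(A^{2k})\le n\rho^{2k}$ gives nothing. The step you flagged as ``the real work'' is thus a genuine gap in your argument, not merely a deferred technicality.
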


\noindent{\it Proof of Theorem \ref{gridlim}.} (1) The integral representing the logarithm of the Mahler measure of $\De_{\Gr_d}$ can be written
$$\int_0^1 \cdots \int_0^1 \log \bigg\vert2d - \sum_{i=1}^d 2 \cos(2 \pi \theta_i)\bigg\vert d\theta_1\cdots d\theta_d$$
$$= \log 2d + \int_0^1 \cdots \int_0^1 \log \bigg\vert1+\sum_{i=1}^d \frac{ \cos( 2 \pi \theta_i)}{d}\bigg\vert d\theta_1\cdots d\theta_d$$
$$= \log 2d + \int_0^1 \cdots \int_0^1 -\sum_{k=1}^\infty \frac{(-1)^k}{k}\bigg( \frac{\sum_{i=1}^d \cos(2 \pi \theta_i)}{d} \bigg)^k d\theta_1\cdots d\theta_d.$$
By symmetry, odd powers of $k$ in the summation contribute zero to the integration. Hence 
$$\log(\De_{\Gr_d})= \log 2d - \int_0^1 \cdots \int_0^1 \sum_{k=1}^\infty \frac{1}{2k}\bigg( \frac{\sum_{i=1}^d \cos(2 \pi \theta_i)}{d} \bigg)^{2k} d\theta_1\cdots d\theta_d \le \log 2d.$$

(2) Let $\La$ be a finite-index subgroup of $\Z^d$. Consider the quotient graph $(\Gr_d)_\La$. The cardinality of its vertex set is $|\Z^d/\La|$. The main result of \cite{Al90}, cited above as Theorem \ref{lowerbound}, implies that 
$$\t_{(\Gr_d)_\La} = \bigg((2 d) (1 - \mu(d))\bigg)^{|\Z^d/\La|},$$
where $\mu$ is a nonnegative function such that $\lim_{d\to \infty} \mu(d) =0$.  
Hence 
$$\lim_{d \to \infty}  \bigg (\frac{1}{|\Z^d/\La|} \log \t_{(\Gr_d)_\La} - \log 2 d\bigg ) = \lim_{d\to \infty} \log (1-\mu(d)) =0.$$
Theorem \ref{limit} completes the proof.  
\qed \\

\begin{remark} One can evaluate $\log M(\De(\Gr_d))$ numerically and obtain an infinite series representing
$\g_{\Gr_d} - \log 2d$. However, showing rigorously that the sum of the series approaches zero as $d$ goes 
to infinity appears to be difficult. (See \cite{SW00}, p.~16 for a heuristic argument.) 
\end{remark}

\bigskip

\ni Department of Mathematics and Statistics,\\
\ni University of South Alabama\\ Mobile, AL 36688 USA\\
\ni Email: \\
\ni  silver@southalabama.edu\\
\ni swilliam@southalabama.edu

\end{document}